\tikzset{help lines/.style={step=#1cm,very thin, color=gray},
help lines/.default=.5} 
\tikzset{thick grid/.style={step=#1cm,thick, color=gray},
thick grid/.default=1} 
\newtheorem{theorem}{Theorem}[section]
\newtheorem{corollary}[theorem]{Corollary}
\newtheorem{proposition}[theorem]{Proposition}
\theoremstyle{definition}
\newtheorem{definition}[theorem]{Definition}
\newtheorem{example}[theorem]{Example}
\theoremstyle{remark}
\newtheorem{remark}[theorem]{Remark}
\numberwithin{equation}{section}
\DeclareMathOperator{\Ker}{Ker}
\DeclareMathOperator{\Ima}{Im}
\DeclareMathOperator{\Hom}{Hom}
\DeclareMathOperator{\id}{id}
\DeclareMathOperator{\dist}{dist}
\DeclareMathOperator{\rank}{rank}
\newcommand{\commentout}[1]{}
\newcommand{\cA}{\ensuremath{{\mathcal{A}}}}
\newcommand{\cM}{\ensuremath{{\mathcal{M}}}}
\newcommand\del{\partial}
\begin{document}
\title{Scaled Homology and Topological Entropy}
\author{Bingzhe Hou}
\address{Bingzhe Hou, School of Mathematics, Jilin university, Changchun, P.R.China, 130012}
\email{houbz@jlu.edu.cn}
\author{Kiyoshi Igusa}
\address{Kiyoshi Igusa, Department of Mathematics, Brandeis university, Waltham, MA, 02453}
\email{igusa@brandeis.edu}
\author{Zihao Liu}
\address{Zihao Liu, Department of Mathematics, Brandeis university, Waltham, MA, 02453}
\email{zihaoliu@brandeis.edu}

\subjclass[2020]{
55N35; 37B40}
\begin{abstract}
In this paper, we build up a scaled homology theory, $lc$-homology, for metric spaces such that every metric space can be visually regarded as ``locally contractible" with this newly-built homology. We check that $lc$-homology satisfies all Eilenberg-Steenrod axioms except the exactness axiom whereas its corresponding $lc$-cohomology satisfies exactness axiom for cohomology. This homology can relax the smooth manifold restrictions on the compact metric space such that the entropy conjecture will hold for the first $lc$-homology group.
\end{abstract}

\maketitle
\section{Introduction and preliminaries}
The topological entropy of a map $f:M\rightarrow M$, denoted by $h(f)$, measures the evolution of distinguishable orbits over time, thereby providing an idea of how complex the orbit structure of a system is. Entropy distinguishes a dynamical system where points that are close together remain close from a dynamical system where sets of points move farther. On the other hand, it measures how much $f$ mixes up the point set topology of $M$ whereas its induced homomorphism $f_*:H_*(M;\mathbb{R})\rightarrow H_*(M;\mathbb{R})$ measures how much $f$ mixes up the algebraic topology of $M$. 

In order to discover the connection between topological entropy and homology theory, there is an entropy conjecture due to Shub \cite{M. Shub} relating $h(f)$ to $r(f_*)$, the spectral radius of $f_*$. The idea is that $h(f)$ should be bounded below by $\log r(f_*)$. Unfortunately, Shub showed \cite{M. Shub} that it is not true in general for continuous maps $f$ of a manifold $M$ nor for $f$ a homeomorphism of a finite complex.

One of the known results \cite{A. Manning} is that the entropy conjecture holds for the first homology group once $M$ is a compact smooth manifold. Moreover, it is also shown in \cite{A. Manning} that $M$ does not necessarily have to be a manifold. It suffices that the space can be seen as ``locally contractible". Motivated by this relaxation, we consider building up a new homology theory such that every metric space can be regarded as locally contractible with this homology. Our initial idea is to ``rescale" the metric space. Naturally, when we observe things around us, sometimes it is hard to guarantee that everything we see is absolutely precise, especially for someone suffering from myopia. For example, given a metric space $(X,d)$,  for any $x\neq y$ with $d(x,y)=\epsilon$, they can be seen as one point when we put a ``scale" greater than $\epsilon$ onto the space. So in the second section, we introduce the definitions of comparatively ``rough" continuous maps, $\epsilon$-continuous maps, $\epsilon$-singular chain groups as well as $\epsilon$-singular homology groups $H_*^{\epsilon}(X)$. If the diameter of a space is sufficiently small, then the $\epsilon$-homology group of that space is trivial for $n>0$, i.e., it is like a point or contractible!

Since we have an explicit chain complex for $\epsilon$-homology, we can easily check that it satisfies the Eilenberg–Steenrod axioms except for homotopy axiom. Indeed, the $\epsilon$-homology is not a functor from category of metric spaces $\cM et$ to category of abelian groups $\cA b$. Concerning this drawback, we take the inverse limit of $(H_*^{\epsilon}(X))_{\epsilon\in\mathbb{R}+}$
to obtain the infinitesimal-scaled homology groups of the space and call them $lc$-homology groups where $lc$ denotes the meaning of ``local contractification". It is then a homology functor and satisfies most of the homology axioms, but it fails to satisfy exactness axiom in general. In the following we discuss two counterexamples and claim that it holds, however, for the corresponding $lc$-cohomology theory.

Our work in the last section generalizes the existing results of entropy conjecture, relaxing the restrictions on the compact metric space on which $f$ acts. The entropy conjecture will then hold for the first homology group with $lc$-homology.
\medskip

Through out this paper, all spaces are metric spaces. To begin with, let's recall the concept of topological entropy defined by Bowen \cite{R. Bowen}.

Given a compact metric space $(X,d)$ and a continuous map $f:X\rightarrow X$, we set $d_n:X\times X\rightarrow\mathbb{R}$ by 
$$
d_n(x,y)=\max_{0\le k<n}d(f^k(x),f^k(y)).
$$
The map $d_n$ is a metric on $X$ for each $n$ and we will consider the metric $d_n$ to count distinguishable orbit segments at a fixed resolution. 

\begin{definition}
Fix $\epsilon>0$ and let $n\in\mathbb{N}$. A set $S\subset X$ is said to be an $(n,\epsilon)$-spanning set for $f$ if for all $x\in X$, there exists $y\in S$ such that $d_n(x,y)<\epsilon$, and it is said to be an $(n,\epsilon)$-separated set for $f$ if for all $x\neq y\in S$, $d_n(x,y)\geq\epsilon$, i.e., there exists $k\in\{0,1,\cdots,n-1\}$ such that $d(f^k(x),f^k(y))\geq\epsilon$.
\end{definition}

Let $r_n(\epsilon,f)$ be the minimum cardinality of an $(n,\epsilon)$-spanning set and $s_n(\epsilon,f)$ be the maximum cardinality of an $(n,\epsilon)$-separated set. Then we have the following relation
$$
r_n(\epsilon,f)\le s_n(\epsilon,f)\le r_n(\frac{\epsilon}{2},f)
$$
and this implies the following equivalent definition of the topological entropy:
\begin{definition}
Let $f:X\rightarrow X$ be a continuous map. The topological entropy of $f$ is defined as
$$
h(f)=\lim\limits_{\epsilon\to 0^+}\limsup\limits_{n\rightarrow\infty}\frac{1}{n}\log
s_{n}(\epsilon,f)=\lim\limits_{\epsilon\to 0^+}\limsup\limits_{n\rightarrow\infty}\frac{1}{n}\log
r_{n}(\epsilon,f).
$$
\end{definition}

In \cite{M. Shub}, Shub stated the following conjecture.

{\bf Entropy conjecture.} Let $f$ be a continuous map on a compact finite-dimensional manifold $M$ to itself. Then 
$$
h(f)\geq \log \rho(f_*),
$$
where $f_{*}:H_{*}(M,\mathbb{R})\rightarrow H_{*}(M, \mathbb{R})$ denotes the homomorphism induced by $f$ on the total homology of $M$, i.e.,
\begin{equation}\label{expansion}
H_{*}(M,\mathbb{R})=\bigoplus\limits_{i=0}\limits^{\dim M} H_i(M,\mathbb{R})
\end{equation}
and $\rho(f_{*})$ denotes the spectral radius of $f_{*}$, i.e., $\rho(f_{*})=\lim\limits_{n\rightarrow\infty}(\|f_*^n\|^{\frac{1}{n}})$, which coincides with the maximum of the moduli of the eigenvalues of $f_*$. The entropy conjectures means that $f_{*}$ must capture some but not necessarily all of the mixing $f$ does. 

Since the expansion (\ref{expansion}) is clearly invariant under $f_*$, we have 
$$
\rho(f_*)=\max_{1\le i\le\dim M}\rho(f_{*i})
$$
where $f_{*i}$ denotes the restriction of $f_*$ to the space $H_i(M;\mathbb{R})$. Hence the entropy conjecture is equivalent to the system of inequalities
$$
h(f)\geq\log \rho(f_{*i}), \ i=1,\cdots, \dim M.
$$

In fact, the entropy conjecture only holds for some special cases. In \cite{A. Manning}, Manning showed that 
\begin{theorem}
Let $M$ be a compact differentiable manifold without boundary and $f:M\rightarrow M$ a continuous map. Then
$$
h(f)\geq\log\rho(f_{*1})
$$
where $f_{*1}:H_1(M;\mathbb{R})\rightarrow H_1(M;\mathbb{R})$ is the induced map on the first homology group.
\end{theorem}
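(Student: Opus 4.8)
The plan is to realize $\rho(f_{*1})$ as the exponential growth rate of a purely topological ``winding'' count in the maximal free abelian cover of $M$, and then to bound that growth rate above by $h(f)$. Fix a Riemannian metric on $M$ (available since $M$ is a smooth closed manifold; assume $M$ connected), let $\Gamma=H_1(M;\ZZ)/\mathrm{torsion}\cong\ZZ^b$, and let $p:\bar M\to M$ be the covering with deck group $\Gamma$. Lift the metric to $\bar M$ and fix a basepoint $\bar x_0\in\bar M$; since $\Gamma$ acts cocompactly by isometries, the displacement $\gamma\mapsto\dist(\bar x_0,\gamma\bar x_0)$ is comparable to the Euclidean norm on $\ZZ^b$ (by cocompactness of the action), and $\bar M$ has a positive systole $s_0=\inf_{\gamma\neq 1}\inf_{\bar x}\dist(\bar x,\gamma\bar x)$. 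Choose a compact fundamental domain $\bar D$, a lift $\bar f:\bar M\to\bar M$ of $f$, and note the equivariance $\bar f\circ\gamma=(f_{*1}\gamma)\circ\bar f$, where $f_{*1}$ acts on $\Gamma$. The central quantity is $p_n:=\#\{\gamma\in\Gamma:\gamma\bar D\cap\bar f^{\,n}(\bar D)\neq\emptyset\}$, the number of fundamental domains met by the image of one domain.

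For the lower bound $\liminf_n\frac1n\log p_n\ge\log\rho(f_{*1})$, I would pick an eigenvalue $\lambda$ of $f_{*1}$ with $|\lambda|=\rho(f_{*1})$ and a (possibly complex) left eigenvector $w$, then choose a loop $\alpha$ with $\langle w,[\alpha]\rangle\neq 0$, which exists because integral loop classes span $H_1(M;\RR)$. Writing $a=[\alpha]\in\Gamma$ and lifting $\alpha$ to a path $\bar\alpha$ from $\bar x_0$ to $a\cdot\bar x_0$, equivariance gives that $\bar f^{\,n}\circ\bar\alpha$ joins a point to its image under the deck transformation $(f_{*1})^n a$, whose displacement is at least $\rho(f_{*1})^n|\langle w,a\rangle|$ up to a constant. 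A connected set joining a point to a translate that far away must meet at least $\rho(f_{*1})^n/C$ fundamental domains; since $\bar\alpha$ lies in finitely many domains, say $m$ of them, and each of their images is a $\Gamma$-translate of $\bar f^{\,n}(\bar D)$, the path $\bar f^{\,n}\circ\bar\alpha$ meets at most $m\,p_n$ domains, yielding $p_n\gtrsim\rho(f_{*1})^n/m$.

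The main work is the upper bound $\limsup_n\frac1n\log p_n\le h(f)$, and this is the step I expect to be the crux. I would show that each element of an $(n,\epsilon)$-spanning set accounts for only boundedly many of the domains counted by $p_n$, so that $p_n\le C\,r_n(\epsilon,f)$; taking $\frac1n\log$, then $n\to\infty$ and finally $\epsilon\to0$ gives the claim. Concretely, each domain met by $\bar f^{\,n}(\bar D)$ corresponds to a winding value $\tau_n(x)\in\Gamma$ defined by $\bar f^{\,n}(\bar x)\in\tau_n(x)\bar D$, and the point is that $(n,\epsilon)$-shadowing orbits have nearly equal windings. Here one chooses $\epsilon$ so small that both $\epsilon<s_0/2$ and $\omega(\epsilon)<s_0/2$, where $\omega$ is the (uniform, by cocompactness) modulus of continuity of $\bar f$; then an inductive ``re-anchoring'' argument shows that if $d_n(x,e)<\epsilon$ one may choose lifts with $\dist(\bar f^{\,k}\bar x,\bar f^{\,k}\bar e)<\epsilon$ for all $k<n$, because at each step the constraint that the projections stay $\epsilon$-close (smaller than half the systole) forces $\bar f$ to land on the unique nearby lift rather than jumping sheets. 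Consequently $\dist(\bar f^{\,n}\bar x,\bar f^{\,n}\bar e)<s_0/2$, so $\|\tau_n(x)-\tau_n(e)\|$ is bounded independently of $n$, giving the desired domination of $p_n$ by $r_n(\epsilon,f)$.

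Putting the two estimates together yields $\log\rho(f_{*1})\le\liminf_n\frac1n\log p_n\le\limsup_n\frac1n\log p_n\le h(f)$. The routine points I would defer are the treatment of the boundary of $\bar D$ (use closed domains and absorb the bounded overlap into constants), the reduction modulo torsion (which does not change the spectral radius on real homology), and the complex-eigenvalue bookkeeping in the lower bound (replace $\langle w,\cdot\rangle$ by its modulus). It is worth flagging that the only place the smoothness of $M$ is genuinely used is in producing a metric with positive systole and a uniformly continuous cocompact lift; isolating exactly this geometric input is presumably what the scaled ($lc$-)homology of the paper is designed to supply for more general compact metric spaces.
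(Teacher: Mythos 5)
Your argument is correct in outline but takes a genuinely different route from the one the paper uses. The paper quotes this theorem from Manning and, in Section~3, reruns Manning's proof in the scaled setting: fix a $\delta$-net $P$, replace a $1$-cycle by a homologous combination of short fixed paths $[x,y]$ between net points, push forward by $f^{k-1}$, re-discretize along a minimal $(k,\delta)$-spanning set $Q_k$, and bound the operator norm of $f_{*1}^{k-1}$ on $H_1(M;\RR)$ (with the $\ell^1$-type quotient norm) by $2(1+|Q_k|)$; the spectral radius formula then gives $\log\rho(f_{*1})\le\limsup\frac1k\log|Q_k|\le h(f)$. You instead pass to the maximal free abelian cover, realize $\rho(f_{*1})$ as the translation-growth rate of the deck endomorphism, and sandwich the fundamental-domain count $p_n$ between $c\,\rho(f_{*1})^n$ and $C\,r_n(\epsilon,f)$ --- essentially Bowen's ``entropy and the fundamental group'' argument specialized to the abelianization. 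Your version makes the dynamical meaning of $\rho(f_{*1})$ more transparent; what Manning's chain-level argument buys, and why the paper adopts it, is that it never leaves $M$: it needs only that nearby points are joined by short paths and that small loops bound, which is exactly what survives in $\epsilon$-scaled homology of a general compact metric space, whereas your argument needs the covering space and a positive systole --- precisely the hypothesis the paper aims to remove (as you note). One bookkeeping point in your upper bound needs a sentence: the lift $\bar e$ produced by re-anchoring depends on $x$, so $\|\tau_n(x)-\tau_n(e)\|$ is bounded only when $\tau_n(e)$ is computed from that lift; relative to the canonical lift in $\bar D$ the discrepancy is $f_{*1}^n(\gamma)$ for some $\gamma$ in a fixed finite set, which is not bounded in $n$. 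The estimate still closes because only boundedly many lifts of each spanning point can arise (all lie in the $\epsilon$-neighborhood of $\bar D$), so each $e$ accounts for at most a constant number of the domains counted by $p_n$, and $p_n\le C\,r_n(\epsilon,f)$ follows.
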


\begin{corollary}
If $M$ has dimension $\le 3$ and $f$ is a homeomorphism, then $h(f)\geq\log \rho(f_*)$ where $f_*$ is the map induced on the homology of all dimensions.
\end{corollary}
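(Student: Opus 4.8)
The plan is to reduce the corollary to Manning's theorem (the Theorem above, applied to $H_1$) by exploiting three structural facts: the $f_*$-invariance of the grading of $H_*(M;\mathbb{R})$, Poincar\'e duality, and the invariance of topological entropy under passage to the inverse of a homeomorphism. As noted just after the statement of the entropy conjecture, it suffices to prove $h(f)\geq\log\rho(f_{*i})$ separately for each $i$ with $0\le i\le\dim M$. Since $\dim M\le 3$, only the four cases $i=0,1,2,3$ can occur, and I would dispatch them one at a time.

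The extreme degrees are essentially free. In degree $i=0$ the homeomorphism $f$ merely permutes the connected components of $M$, so $f_{*0}$ is a permutation matrix and $\rho(f_{*0})=1$; as $h(f)\ge 0$ always, the inequality $h(f)\ge\log\rho(f_{*0})=0$ is automatic. In the top degree $i=\dim M$, either $M$ is orientable and $f_{*,\dim M}$ is multiplication by the degree $\pm1$ of the homeomorphism $f$, or $M$ is non-orientable and $H_{\dim M}(M;\mathbb{R})=0$; in both cases $\rho(f_{*,\dim M})=1$ (or the group vanishes), and the inequality is again trivial. The case $i=1$ is exactly Manning's theorem, which applies since $M$ is a compact manifold without boundary.

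The only remaining case is $i=2$ when $\dim M=3$, and this is where the real work lies. Here I would invoke Poincar\'e duality. Assume first that $M$ is orientable, so that over $\mathbb{R}$ there is an isomorphism $D\colon H^{1}(M;\mathbb{R})\xrightarrow{\ \sim\ }H_{2}(M;\mathbb{R})$ given by capping with the fundamental class $[M]$. The projection formula $f_*(f^*\alpha\cap[M])=\alpha\cap f_*[M]$ together with $f_*[M]=\deg(f)\,[M]=\pm[M]$ shows that, under $D$, the operator $f_{*2}$ on $H_2$ is conjugate to $\pm (f^*)^{-1}$ acting on $H^1(M;\mathbb{R})$. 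Since over the field $\mathbb{R}$ the map $f^*$ on $H^1$ is the transpose of $f_{*1}$ on $H_1$, and since inversion together with multiplication by $\pm1$ sends the eigenvalue multiset $\{\lambda\}$ of $f_{*1}$ to $\{\pm\lambda^{-1}\}$, I obtain the spectral-radius identity $\rho(f_{*2})=\rho\big((f^{-1})_{*1}\big)$. Applying Manning's theorem to the homeomorphism $f^{-1}$ gives $h(f^{-1})\ge\log\rho\big((f^{-1})_{*1}\big)=\log\rho(f_{*2})$, and because topological entropy is invariant under inversion of a homeomorphism of a compact metric space, $h(f)=h(f^{-1})$, so $h(f)\ge\log\rho(f_{*2})$.

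Combining the four cases yields $h(f)\ge\log\rho(f_{*i})$ for every $i$, hence $h(f)\ge\max_i\log\rho(f_{*i})=\log\rho(f_*)$. The main obstacle I anticipate is the bookkeeping in the duality step: pinning down the conjugation and its effect on eigenvalues carefully enough to conclude $\rho(f_{*2})=\rho\big((f^{-1})_{*1}\big)$, and handling the non-orientable case, where $\mathbb{R}$-coefficient Poincar\'e duality fails. For the latter I would pass to the orientation double cover $\widetilde{M}\to M$, lift $f$ to a homeomorphism $\widetilde{f}$, use that a finite covering does not change topological entropy, and relate $f_*$ on $H_*(M;\mathbb{R})$ to $\widetilde{f}_*$ on $H_*(\widetilde{M};\mathbb{R})$ via the transfer in order to transport the orientable argument downstairs.
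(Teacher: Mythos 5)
The paper does not actually prove this corollary --- it is quoted from Manning's 1975 paper as background, so there is no internal proof to compare against. Your argument is correct and is essentially Manning's original one: the degree-by-degree reduction, with $i=0$ and $i=\dim M$ trivial, $i=1$ being the Theorem itself, and the only substantive case $i=2$, $\dim M=3$ handled exactly as you do via Poincar\'e duality, the resulting identity $\rho(f_{*2})=\rho\bigl((f^{-1})_{*1}\bigr)$, Manning's theorem applied to $f^{-1}$, and the invariance $h(f)=h(f^{-1})$, with the orientation double cover (on which $f$ lifts because a homeomorphism preserves $w_1$) disposing of the non-orientable case as you indicate.
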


In \cite{J.Palis}, the authors propose to imitate for homeomorphisms the ``Markov approximation" procedure for diffeomorphisms mentioned in \cite{M. Shub and D. Sullivan} and construct a dense set of homeomorphisms where the entropy conjecture holds. 
\begin{theorem}
If $\dim M\geq 5$, the entropy conjecture holds for all homeomorphisms belonging to an open dense set of the space of all homeomorphisms of $M$ with the $C^0$-topology.
\end{theorem}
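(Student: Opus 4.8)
The plan is to follow the ``Markov approximation'' strategy of \cite{M. Shub and D. Sullivan}, transported from the smooth to the topological category. Since Manning's theorem already furnishes $h(f)\ge\log\rho(f_{*1})$ for every continuous self-map, the real content is to produce, densely and robustly, homeomorphisms for which the analogous bound holds on $H_i$ in all degrees, and then to assemble these into the single inequality $h(f)\ge\log\rho(f_*)$ via the identity $\rho(f_*)=\max_{1\le i\le\dim M}\rho(f_{*i})$ recorded above. Throughout, one exploits that $C^0$-close homeomorphisms of a manifold are homotopic, so that $g_*=f_*$ on $H_*(M;\mathbb{R})$ whenever $g$ is $C^0$-close to $f$; in particular $\rho(g_*)=\rho(f_*)$ is locally constant in the $C^0$ topology, and only the entropy side needs to be controlled.

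First I would fix a homeomorphism $f$ and a tolerance, and construct a homeomorphism $g$, $C^0$-close to $f$, respecting a handle (cellular) decomposition of $M$ in a Markov fashion: $g$ should carry each cell onto a union of cells so that the induced action on the cellular chain complex is, in each degree $i$, a non-negative integer transition matrix $A_i$. Because the boundary maps are $A$-equivariant, the filtration $B_i\subseteq Z_i\subseteq C_i$ is preserved, so the eigenvalues of the homological action $g_{*i}$ on $H_i=Z_i/B_i$ form a subset of the eigenvalues of $A_i$. Hence $\rho(g_{*i})\le\rho(A_i)$ in every degree, and $\log\rho(g_*)\le\max_i\log\rho(A_i)$.

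Next I would read off the entropy from the combinatorics. By Perron--Frobenius, $\tfrac1n\log\mathrm{tr}(A_i^n)\to\log\rho(A_i)$, and the admissible itineraries counted by $\mathrm{tr}(A_i^n)$ are to be realized as genuine periodic orbits of $g$ that shadow the prescribed cell sequences. Taking $\epsilon$ below the mesh of the decomposition, distinct itineraries yield $(n,\epsilon)$-separated points, so $s_n(\epsilon,g)\gtrsim\mathrm{tr}(A_i^n)$ and therefore $h(g)\ge\max_i\log\rho(A_i)\ge\log\rho(g_*)=\log\rho(f_*)$. This is where the hypothesis $\dim M\ge5$ is indispensable: realizing the prescribed incidences by an honest homeomorphism, and manufacturing the local expansion that keeps distinct itineraries separated, relies on general position, engulfing, and handle manipulation, all available only in high dimensions. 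Since these expanding blocks can be built hyperbolically enough that shadowing and separation persist under small $C^0$ perturbations, the good maps fill out an open set; and because every $f$ admits such a $g$ arbitrarily nearby, the good set is open and dense, with $\rho(g_*)$ locally constant as noted.

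The main obstacle I anticipate is precisely the topological Markov approximation. In the smooth setting one builds hyperbolic blocks whose stable and unstable foliations deliver shadowing and separation automatically, whereas for homeomorphisms one must synthesize the same combinatorial expansion by cut-and-paste constructions in the topological category and then verify two matchings simultaneously: that the chain-level matrix $A_i$ genuinely dominates $f_{*i}$ \emph{in every degree}, and that the itineraries counted by $\mathrm{tr}(A_i^n)$ actually correspond to $(n,\epsilon)$-separated orbits surviving the inevitably imprecise $C^0$ perturbation. Controlling the induced map on homology rather than merely on chains, and avoiding the degree cancellation that afflicts naive Lefschetz counts, is the crux; the homological and dynamical estimates must be aligned degree by degree so that the single maximum over $i$ yields $h(g)\ge\log\rho(g_*)=\log\rho(f_*)$.
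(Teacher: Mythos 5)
This statement is not proved in the paper at all: it is quoted as a known theorem of Palis--Pugh--Shub--Sullivan \cite{J.Palis}, and the only gloss the paper offers is the single sentence that the proof ``imitates for homeomorphisms the Markov approximation procedure for diffeomorphisms'' of \cite{M. Shub and D. Sullivan}. So there is no in-paper argument to compare yours against line by line; the relevant comparison is whether your sketch matches the strategy the paper attributes to its source, and it does. Your outline correctly isolates the three standard ingredients: local constancy of $f_*$ (hence of $\rho(f_*)$) in the $C^0$ topology because $C^0$-close homeomorphisms of a manifold are homotopic, so only the entropy side needs control; a Markov-type homeomorphism $g$ near $f$ whose action on a handle decomposition is governed by non-negative transition matrices $A_i$; and the Perron--Frobenius/separated-orbit count giving $h(g)\ge\max_i\log\rho(A_i)\ge\log\rho(g_*)$.

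That said, as a proof the proposal has a genuine gap, and it is exactly the one you flag yourself as ``the main obstacle'': the existence, $C^0$-densely in dimension $\ge 5$, of a homeomorphism that carries a handle decomposition in Markov fashion with enough genuine expansion that distinct itineraries are $(n,\epsilon)$-separated and remain so under small perturbation. That construction \emph{is} the theorem; everything else in your write-up is routine. Two specific soft spots inside it: first, the inequality $\rho(g_{*i})\le\rho(A_i)$ requires $g$ to be cellular and the chain matrix to be taken with absolute values to avoid degree cancellation --- an arbitrary homeomorphism is not cellular, and making it so while staying $C^0$-close is part of the high-dimensional surgery you are deferring; second, openness does not follow merely from ``building the blocks hyperbolically enough,'' since topological entropy is not lower semicontinuous in the $C^0$ topology --- one must exhibit a $C^0$-persistent mechanism (a topological horseshoe or linked-handle configuration) forcing the separated orbits to survive perturbation. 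If you intend this as a proof rather than a reading of \cite{J.Palis}, those two steps must be carried out; as a summary of the cited argument, it is accurate.
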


Next, due to Nitecki in \cite{Z. Nitecki}, this holds for diffeomorphisms with a hyperbolic structure. Thin would imply that any sufficiently small perturbation in the $C^0$-topology of such a homeomorphism could not decrease the topological entropy. 

For arbitrary homeomorphisms by \cite{C.C. Pugh}, however, the entropy conjecture does not hold.

It is also shown in \cite{A. Manning} that it is not necessary to suppose that $M$ is a manifold. It suffices that the following conditions be met:
\begin{enumerate}
\item For any $\epsilon>0$, there exists a $\delta>0$ such that any two points $x$ and $y$ for which $d(x,y)<\delta$ can be joined by a path of diameter $<\epsilon$.
\item There exists an $\epsilon_0>0$ such that any loop of diameter $<\epsilon_0$ is contractible in $M$.
\end{enumerate}
Inspired by the relaxation of the restrictions Manning did on the space on which $f$ acts, we attempt to generalize the existing results regarding smooth manifolds to a metric scale.

\section{Scaled homology and cohomology}
\begin{definition}
Given $\epsilon>0$, a map $f:X\rightarrow Y$ is said to be $\epsilon$-continuous if there exists $\delta=\delta(\epsilon)>0$
such that 
$$
\sup\{d_Y(f(x_1), f(x_2)):d_X(x_1,x_2)<\delta\}<\epsilon.
$$
\end{definition}

Let $\Delta^n= \{(t_0,...,t_n)\in\mathbb{R}^{n+1}|\sum_{i=0}^n t_i=1\ {\rm and}\ t_i\geq 0, \forall i\} $, the standard $n$-simplex. An $\epsilon$-continuous map $\sigma:\Delta^n\rightarrow X$ is said to be an $\epsilon$-singular $n$-simplex. Moreover, we define the $n$-dimensional $\epsilon$-singular chain group, denoted by $C_n^{\epsilon}(X)$, to be the free abelian group generated by all $\epsilon$-singular $n$-simplices of $X$, and $\partial_n:C_n^{\epsilon}(X)\rightarrow C_{n-1}^{\epsilon}(X)$ to be the boundary operator defined as the one in classic singular homology theory, i.e.,
\[\partial_n\sigma=\sum_{i=0}^n(-1)^i\sigma|[v_0,\cdots,\hat{v_i},\cdots,v_n].\]
Then we call $(C_{*}^{\epsilon}(X),\partial)$ an $\epsilon$-singular chain complex.

\begin{definition}
Let $(C_*^{\epsilon}(X),\partial)$ be an $\epsilon$-singular chain complex. Define $Z_n^{\epsilon}(X)=\Ker(\partial_n)$ to be the subgroup of cycles and $B_n^{\epsilon}(X)=\Ima(\partial_{n+1})$ to be the subgroup of boundaries. Furthermore,
$H_n^{\epsilon}(X)=Z_n^{\epsilon}(X)/B_n^{\epsilon}(X)$ is said to be the  $n^{th}$ $\epsilon$-homology group of $X$.
\end{definition}
\begin{proposition} If $(X,d)$ is a metric space with diameter $<\epsilon$, {\rm i.e.}, the maximum distance between any two of its points is less than $\epsilon$, then $H_0^{\epsilon}(X)=\mathbb{Z}$ and $H_n^{\epsilon}(X)=0$ for all $n>0$. 
\end{proposition}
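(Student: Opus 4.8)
The plan is to exploit the diameter hypothesis to strip away all continuity constraints, and then run the classical cone (chain-contraction) argument verbatim.

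First I would record the decisive observation. Because $\diam(X)<\epsilon$, we have $d_X(\sigma(t),\sigma(t'))\le \diam(X)<\epsilon$ for \emph{any} function $\sigma:\Delta^n\to X$ and all $t,t'\in\Delta^n$. Hence, choosing any $\delta>0$ in the definition of $\epsilon$-continuity, the quantity $\sup\{d_X(\sigma(t),\sigma(t')):d_{\Delta^n}(t,t')<\delta\}$ is at most $\diam(X)<\epsilon$, so every set-theoretic map $\Delta^n\to X$ is automatically an $\epsilon$-singular simplex. Consequently $C_n^{\epsilon}(X)$ is the free abelian group on \emph{all} maps $\Delta^n\to X$, with no constraint whatsoever. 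This is the crux: in ordinary singular theory the cone over a simplex need not be continuous unless $X$ is convex or star-shaped, but here that obstruction simply evaporates.

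Next I would build the chain contraction. Fix a basepoint $x_0\in X$ and define the cone operator $D:C_n^{\epsilon}(X)\to C_{n+1}^{\epsilon}(X)$ on generators by $D\sigma=[x_0,\sigma]$, the join with apex $x_0$, namely
$$(D\sigma)(t_0,\dots,t_{n+1})=\begin{cases} x_0, & t_0=1,\\ \sigma\!\left(\tfrac{t_1}{1-t_0},\dots,\tfrac{t_{n+1}}{1-t_0}\right), & t_0<1,\end{cases}$$
extended $\mathbb{Z}$-linearly. By the previous paragraph each $D\sigma$ is a legitimate $\epsilon$-singular $(n+1)$-simplex. The standard face-by-face computation — the $0$-th face recovers $\sigma$, while the remaining faces are cones on the faces of $\sigma$ — yields the chain-homotopy identity
$$\partial D+D\partial=\id-\eta,$$
where $\eta$ vanishes in positive degrees and sends every $0$-simplex to the constant simplex at $x_0$ in degree $0$.

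Finally I would read off the homology. For $n>0$, if $z\in Z_n^{\epsilon}(X)$ then $\partial z=0$ and $\eta z=0$, so $z=\partial Dz\in B_n^{\epsilon}(X)$; hence $H_n^{\epsilon}(X)=0$. For $n=0$, let $\varepsilon:C_0^{\epsilon}(X)\to\mathbb{Z}$ be the augmentation sending each $0$-simplex to $1$. The inclusion $B_0^{\epsilon}(X)\subseteq\Ker\varepsilon$ is immediate, and the contraction supplies the reverse: for $z\in\Ker\varepsilon$ we have $\eta z=\varepsilon(z)\,[x_0]=0$ and $\partial z=0$, so $z=\partial Dz\in B_0^{\epsilon}(X)$. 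Thus $\varepsilon$ descends to an isomorphism $H_0^{\epsilon}(X)\cong\mathbb{Z}$. I expect the only genuine work to be the verification of the cone identity, which is pure sign bookkeeping identical to the classical case, together with the care needed to confirm $\eta z=\varepsilon(z)[x_0]$ so that the degree-zero computation closes up; the conceptual obstacle, flagged above, is merely recognizing that the diameter bound places the cone construction inside $\epsilon$-continuous maps for free, after which this is the usual proof that a cone is acyclic.
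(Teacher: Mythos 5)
Your proof is correct, and it hinges on exactly the same crux as the paper's: since $\diam(X)<\epsilon$, the $\epsilon$-continuity condition is vacuous and $C_n^{\epsilon}(X)$ is free on \emph{all} set-theoretic maps $\Delta^n\to X$. Where you diverge is in the choice of acyclicity machine. The paper first computes $H_n^{\epsilon}(\{\mathrm{pt}\})$, observes that the constant map $f\equiv x_0$ is $\epsilon$-homotopic to $\mathrm{id}_X$, and then builds the prism operator $P(\sigma)=\sum_i(-1)^iF\circ(\sigma\times\mathrm{id})|[v_0,\dots,v_i,w_i,\dots,w_n]$ to exhibit a chain homotopy $f_\sharp\simeq\mathrm{id}_\sharp$, concluding via the induced isomorphism with the homology of a point. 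You instead write down the cone operator $D\sigma=[x_0,\sigma]$ and get the chain contraction $\partial D+D\partial=\id-\eta$ directly, then read off $H_n^{\epsilon}(X)=0$ for $n>0$ and identify $H_0^{\epsilon}(X)\cong\mathbb{Z}$ explicitly through the augmentation. Your route is more economical and self-contained: it avoids introducing $\epsilon$-homotopies and the auxiliary point space, and it handles the degree-zero case by an honest computation ($B_0^{\epsilon}(X)=\Ker\varepsilon$) rather than by appeal to the isomorphism with $H_0^{\epsilon}(\{\mathrm{pt}\})$. What the paper's route buys is narrative: it makes visible that $X$ at scale $\epsilon$ is $\epsilon$-homotopy equivalent to a point, which is the ``local contractification'' picture the authors want to set up, and the prism operator reappears verbatim in their proof of the Homotopy Axiom. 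The only pedantic caveat for your version is that $X$ must be nonempty for $H_0^{\epsilon}(X)\cong\mathbb{Z}$, which is implicit in the statement.
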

\begin{proof}
As in the case of singular homology theory, it is easy to see that if $X$ is a point $\{{\rm pt}\}$, then $H_n^{\epsilon}(X)=0$ for $n>0$ and $H_0^{\epsilon}(X)\cong\mathbb{Z}$ since there is a unique $\epsilon$-singular $n$-simplex $\sigma_n$ for all $n$.

Now let $f$ be a constant map sending all of $X$ to $x_0\in X$. Since ${\rm diam} X<\epsilon$, $f$ is $\epsilon$-homotopic to the identity map $\id:X\rightarrow X$, i.e., there exists $F:X\times I\rightarrow Y$ given by $F(x,t)=f_t(x)$ that is $\epsilon$-continuous such that $f_0=f$, $f_1=\id$. 

Let $\Delta^n\times {0}=[v_0,\cdots,v_n]$ and $\Delta^n\times {1}=[w_0,\cdots,w_n]$ and let $P:C_n^{\epsilon}(X)\rightarrow C_{n+1}^{\epsilon}(X)$ be such that 
$$
P(\sigma)=\sum_i(-1)^iF\cdot(\sigma\times\id)|[v_0,\cdots,v_i,w_i,\cdots,w_n]
$$
for $\sigma:\Delta^n\rightarrow X$, an $\epsilon$-continuous map. It is easy to check that $P$ is a chain homotopy between the chain maps $f_{\sharp}$ and $\id_{\sharp}$ that are induced by $f$ and $\id$ respectively, and $f_*:H_n^{\epsilon}(X)\rightarrow H_n^{\epsilon}(\{{\rm pt}\})$ are isomorphisms for all $n$. Therefore, $H_0^{\epsilon}(X)=\mathbb{Z}$ and $H_n^{\epsilon}(X)=0$ for all $n>0$.
\end{proof}

Thus, a metric space $(X,d)$ with $\epsilon$-scale can be informally seen as a ``locally contractible" space. Like in the singular homology theory, we can define the reduced $\epsilon$-homology groups $\widetilde{H}_*^{\epsilon}(X)$ to be the homology groups of the augmented chain complex
$$
\xymatrix{\cdots \ar[r] & C_{2}^{\epsilon}(X) \ar[r]^{\partial_2} & C_{1}^{\epsilon}(X) \ar[r]^{\partial_1} & C_{0}^{\epsilon}(X) \ar[r]^{\varepsilon} & \mathbb{Z} \ar[r] & 0}
$$
where $\varepsilon(\sum_ik_i\sigma_i)=\sum_ik_i$. It can be easily checked that $H_0^{\epsilon}(X)=\widetilde{H}_0^{\epsilon}(X)\oplus\mathbb{Z}$ and $H_n^{\epsilon}(X)=\widetilde{H}_n^{\epsilon}(X)$ for all $n>0$.

For a subspace $A\subset X$, let $C_n^{\epsilon}(X,A)=C_n^{\epsilon}(X)/C_n^{\epsilon}(A)$. Then $\partial:C_n^{\epsilon}(A)\rightarrow C_{n-1}^{\epsilon}(A)$ induces a quotient boundary map $\partial:C_n^{\epsilon}(X,A)\rightarrow C_{n-1}^{\epsilon}(X,A)$ and we have $\partial^2=0$. So we can define the relative $\epsilon$-homology group $H_n^{\epsilon}(X,A)=\Ker\partial/\Ima\partial$.

Thus, a metric space $(X,d)$ with $\epsilon$-scale can be informally seen as a ``locally contractible" space. Like in the singular homology theory, we can define the reduced $\epsilon$-homology groups $\widetilde{H}_*^{\epsilon}(X)$ to be the homology groups of the augmented chain complex
$$
\xymatrix{\cdots \ar[r] & C_{2}^{\epsilon}(X) \ar[r]^{\partial_2} & C_{1}^{\epsilon}(X) \ar[r]^{\partial_1} & C_{0}^{\epsilon}(X) \ar[r]^{\varepsilon} & \mathbb{Z} \ar[r] & 0}
$$
where $\varepsilon(\sum_ik_i\sigma_i)=\sum_ik_i$. It can be easily check that $H_0^{\epsilon}(X)=\widetilde{H}_0^{\epsilon}(X)\oplus\mathbb{Z}$ and $H_n^{\epsilon}(X)=\widetilde{H}_n^{\epsilon}(X)$ for all $n>0$.

For a subspace $A\subset X$, let $C_n^{\epsilon}(X,A)=C_n^{\epsilon}(X)/C_n^{\epsilon}(A)$. Then $\partial:C_n^{\epsilon}(A)\rightarrow C_{n-1}^{\epsilon}(A)$ induces a quotient boundary map $\partial:C_n^{\epsilon}(X,A)\rightarrow C_{n-1}^{\epsilon}(X,A)$ and we have $\partial^2=0$. So we can define the relative $\epsilon$-homology group $H_n^{\epsilon}(X,A)=\Ker\partial/\Ima\partial$.

An interesting object is the limit of $H_n^{\epsilon}(X)$ as $\epsilon\rightarrow 0^+$. Note that an $n$-dimensional $\mu$-singular simplex is naturally an $n$-dimensional $\epsilon$-singular simplex if $0<\mu\leq\epsilon$. Then, let $\varphi_{\mu\epsilon *}$ be the homomorphism induced by the natural inclusion chain map 
$\varphi_{\mu\epsilon}: (C_n^{\mu}(X),\partial)\hookrightarrow (C_n^{\epsilon}(X),\partial)$.
In particular, $\varphi_{\epsilon\epsilon *}$ is the identity on $H_n^{\epsilon}(X)$.

Since $(\mathbb{R}^+, \leq)$ is a directed partially-ordered set, we obtain an inverse system 
$$
((H_n^{\epsilon}(X))_{\epsilon\in\mathbb{R}^+}, (\varphi_{\mu\epsilon *})_{\mu\leq\epsilon\in\mathbb{R}^+}).
$$
Furthermore, we can take the inverse limit
\[\begin{split}
 H_n^{lc}(X)&=\lim\limits_{\overleftarrow{\epsilon\in\mathbb{R}^+}}((H_n^{\epsilon}(X))_{\epsilon\in\mathbb{R}^+}, (\varphi_{\mu\epsilon *})_{\mu\leq\epsilon\in\mathbb{R}^+})\\&=\{ \overrightarrow{[a]}\in\prod\limits_{\epsilon\in\mathbb{R}^+}H_n^{\epsilon}(X): [a_{\epsilon}]=\varphi_{\mu\epsilon *}([a_{\mu}]) {\rm \ for \ all} \
\mu\leq\epsilon \ {\rm in} \ \mathbb{R}^+\}.   
\end{split}\]
Let $\{\epsilon_m\}$ be a non-increasing sequence of $\mathbb{R}^{+}$ converging to $0$.
Then we have
\[
\lim\limits_{\overleftarrow{\epsilon\in\mathbb{R}^+}}((H_n^{\epsilon}(X))_{\epsilon\in\mathbb{R}^+}, (\varphi_{\mu\epsilon *})_{\mu\leq\epsilon\in\mathbb{R}^+})\cong \varprojlim((H_n^{\epsilon_m}(X))_{m=1}^{\infty}, (\varphi_{\epsilon_k\epsilon_l *})_{l\leq k\in\mathbb{N}}),\]
since $\{\epsilon_m\}$ is a cofinal subset of the directed index set $\mathbb{R}^+$. We call the inverse limit above the $n^{th}$ infinitesimal-scaled homology group of $X$ or $lc$-homology group of $X$ (the homology group under infinitesimal scale or the homology group of the ``local contractification" of $X$).  

The $lc$-homology group of $X$ is said to be stable if there exists $\epsilon>0$ such that for all $0<\epsilon_2\leq\epsilon_1\leq\epsilon$, $\varphi_{\epsilon_2,\epsilon_1 *}$ is an isomorphism, i.e., for any $0<\mu\leq\epsilon$, $H_n^{\mu}(X)\cong H_n^{\epsilon}(X)$. Later we will show that when $X$ is a Riemannian manifold, its $lc$-homology group will be stable.


\medskip
To begin with, we check that $lc$-homology is a functor from category of topological spaces $\cM et^u$ to category of abelian groups $\cA b$, where $\cM et^u$ denotes the category that has metric spaces as its objects and uniformly continous maps between metric spaces as its morphisms.

\begin{proposition}\label{homlc}
If $f:X\rightarrow Y$ is a uniformly continuous map, then $f$ induces a homomorphism $f^{lc}_{*}: H_{n}^{lc}(X)\rightarrow H_{n}^{lc}(Y),  \forall n\geq 0$.
\end{proposition}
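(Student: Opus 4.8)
The plan is to construct the induced map one scale at a time and then assemble the pieces into a morphism of inverse systems. The guiding observation is that uniform continuity is exactly the hypothesis that lets post-composition with $f$ travel between scales in a controlled, \emph{global} way.

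First I would record the scale-shifting lemma. Fix a target scale $\epsilon>0$ on $Y$. Uniform continuity of $f$ supplies a modulus $\delta=\delta(\epsilon)>0$, which I choose non-decreasing in $\epsilon$ and tuned to the target $\epsilon/2$, so that $d_X(x_1,x_2)<\delta$ implies $d_Y(f(x_1),f(x_2))<\epsilon/2$. I claim that for every $\mu\le\delta(\epsilon)$, post-composition with $f$ sends $\mu$-singular simplices of $X$ to $\epsilon$-singular simplices of $Y$: if $\delta_\sigma$ witnesses the $\mu$-continuity of $\sigma:\Delta^n\to X$, then $d(t_1,t_2)<\delta_\sigma$ forces $d_X(\sigma(t_1),\sigma(t_2))<\mu\le\delta$, whence $d_Y(f\sigma(t_1),f\sigma(t_2))<\epsilon/2$, so the defining supremum is $\le\epsilon/2<\epsilon$ and $f\circ\sigma$ is $\epsilon$-continuous. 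Extending freely gives a homomorphism $f_\sharp^{\mu,\epsilon}:C_n^\mu(X)\to C_n^\epsilon(Y)$ which commutes with $\partial$ (the boundary formula is the singular one, built from face inclusions), hence a chain map inducing $f_*^{\mu,\epsilon}:H_n^\mu(X)\to H_n^\epsilon(Y)$.

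Next I would verify the two coherence identities that let these shifted maps descend to the inverse limit. Since $f_\sharp^{\mu,\epsilon}$ is literally ``compose with $f$ and regard the result at scale $\epsilon$'', it commutes with the inclusion chain maps on both sides. In homology this reads, for $\mu'\le\mu\le\delta(\epsilon)$, as $f_*^{\mu',\epsilon}=f_*^{\mu,\epsilon}\circ\varphi^X_{\mu'\mu *}$, and, for $\mu\le\delta(\epsilon)$ and $\epsilon\le\epsilon'$ (so $\mu\le\delta(\epsilon)\le\delta(\epsilon')$), as $\varphi^Y_{\epsilon\epsilon' *}\circ f_*^{\mu,\epsilon}=f_*^{\mu,\epsilon'}$, where $\varphi^X$ and $\varphi^Y$ denote the transition maps of the two inverse systems. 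With these in hand I would define $f^{lc}_*$: given a coherent family $\overrightarrow{[a]}=([a_\mu])_{\mu\in\mathbb{R}^+}\in H_n^{lc}(X)$, set $b_\epsilon:=f_*^{\mu,\epsilon}([a_\mu])$ for any $\mu\le\delta(\epsilon)$. The first identity together with the coherence relation $[a_\mu]=\varphi^X_{\mu'\mu *}([a_{\mu'}])$ shows $b_\epsilon$ is independent of the choice of $\mu$, and the second identity gives $\varphi^Y_{\epsilon\epsilon' *}(b_\epsilon)=b_{\epsilon'}$ for $\epsilon\le\epsilon'$, so $(b_\epsilon)_{\epsilon\in\mathbb{R}^+}$ is a coherent family defining an element of $H_n^{lc}(Y)$. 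Setting $f^{lc}_*(\overrightarrow{[a]})=(b_\epsilon)_\epsilon$ and noting that each $f_*^{\mu,\epsilon}$ is a group homomorphism yields the desired $f^{lc}_*:H_n^{lc}(X)\to H_n^{lc}(Y)$.

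The step I expect to be the main obstacle is honestly handling the scale shift rather than any single computation. Unlike an ordinary chain map, $f$ does not preserve the index $\epsilon$: it carries scale $\delta(\epsilon)$ on $X$ into scale $\epsilon$ on $Y$, and $\delta$ is only a modulus of continuity. Producing a genuine map of inverse systems therefore hinges on these maps interleaving correctly with the transition maps $\varphi^X,\varphi^Y$, and this is precisely where \emph{uniform} (rather than merely pointwise) continuity is indispensable: one needs a single $\delta(\epsilon)$ valid across all of $X$ at once, so that one fixed $\mu\le\delta(\epsilon)$ works uniformly for every simplex simultaneously. The bookkeeping can be shortened by replacing $\mathbb{R}^+$ with the cofinal decreasing sequence $\{\epsilon_m\}\to 0$ noted above, reducing the argument to a morphism of inverse sequences indexed by $\mathbb{N}$.
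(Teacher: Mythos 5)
Your proposal is correct and follows essentially the same route as the paper: uniform continuity supplies a global modulus so that post-composition with $f$ gives chain maps between scaled chain complexes, and the resulting scale-shifted maps on homology assemble into a morphism of inverse systems. The only difference is organizational --- the paper indexes forward (an $\epsilon$-scale on $X$ lands in a $\mu$-scale on $Y$ determined by $f$) while you index backward from the target scale on $Y$, and your explicit verification of the two coherence identities is a more careful rendering of the paper's concluding ``take the inverse limit'' step.
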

\begin{proof}
Let $\sigma:\Delta^n\rightarrow X$ be an $\epsilon$-continuous map. Composing with $f$, we get a $\mu$-continuous map $f_{\sharp}^{\epsilon\mu}(\sigma)=f\sigma: \Delta^n\rightarrow Y$, where $\mu$ is determined by $f$ and $\sigma$. In fact, we can choose $\mu=\inf\{r>0:{\rm whenever} \ d_X(\sigma(v_1), \sigma(v_2))<\epsilon, d_Y(f(\sigma(v_1)), f(\sigma(v_2)))<r\}$. Extend $f_\sharp^{\epsilon\mu}: C_n^\epsilon(X)\rightarrow C_n^{\mu}(Y)$ linearly by $f_\sharp^{\epsilon\mu}(\sum_in_i\sigma_i)=\sum_in_if_\sharp^{\epsilon\mu}(\sigma_i)$ and we have $f_\sharp^{\epsilon\mu}\partial=\partial f_\sharp^{\epsilon\mu}$, which implies that $f_\sharp^{\epsilon\mu}$ takes cycles to cycles and takes boundaries to boundaries. Hence $f_\sharp$ induces a homomorphism $f_*^{\epsilon\mu}:H_n^\epsilon(X)\rightarrow H_n^\mu(Y)$. 

Let $\{\epsilon_n\}$ be a monotonically non-increasing sequence such that $\lim_{n\to\infty}\epsilon_n=0$. For each $\epsilon_k$-continuous map  $\varphi:\Delta^n\rightarrow X$, composing with $f$, we get a corresponding $\mu_k$-continuous map $f_\sharp^{\epsilon_k\mu_k}(\varphi)=f\varphi:\Delta^n\rightarrow Y$, where $\mu_k=\inf\{r_k>0:{\rm if} \ d_X(\sigma(v_1), \sigma(v_2))<\epsilon_k, d_Y(f(\sigma(v_1)), f(\sigma(v_2)))<r_k\}$. Since $f$ is uniformly continuous, $\mu_n\rightarrow 0$ as $\epsilon_n\rightarrow 0$. Hence, after taking the inverse limit, we obtain the induced homomorphism $f^{lc}_{*}: H_{n}^{lc}(X)\rightarrow H_{n}^{lc}(Y)$.
\end{proof}

Naturally, we can define the corresponding $lc$-cohomology group of $X$ in the following ways.

Let $G$ be an abelian group. For the $\epsilon$-singular chain complexes
$$
\xymatrix{\cdots \ar[r] & C_{n+1}^{\epsilon}(X) \ar[r]^{\partial} & C_{n}^{\epsilon}(X) \ar[r]^{\partial} & C_{n-1}^{\epsilon}(X) \ar[r] & \cdots}
$$
we apply the $\Hom(-,G)$ functor and obtain the corresponding $\epsilon$-singular cochain complex
$$
\xymatrix{\cdots & C^{n+1}_{\epsilon}(X;G) \ar[l] & C^{n}_{\epsilon}(X;G)\ar[l]_{\delta} & C^{n-1}_{\epsilon}(X;G)\ar[l]_{\delta} & \cdots\ar[l]}
$$
where $C_{\epsilon}^n(X;G)=\Hom(C_n^{\epsilon}(X);G)$ and $\delta=\partial^*:C_{\epsilon}^n(X;G)\rightarrow C_{\epsilon}^{n+1}(X;G)$ sending $\varphi\in C_{\epsilon}^n(X;G)$ to $\varphi\partial\in C_{\epsilon}^{n+1}(X;G)$. Then we can define the $\epsilon$-singular cohomology group 
$$
H_{\epsilon}^n(X;G)=\Ker\delta/\Ima\delta.
$$
Let $i_{\mu\epsilon}^*:H_{\epsilon}^n(X;G)\rightarrow H_{\mu}^n(X;G)$ be the homomorphism induced by the natural inclusion $i_{\mu\epsilon}:C_n^{\mu}(X)\hookrightarrow C_n^{\epsilon}(X)$ and we obtain a direct system $((H_{\epsilon}^n(X))_{\epsilon\in\mathbb{R}^+},(i_{\mu\epsilon}^*)_{\mu\le\epsilon\in\mathbb{R}^+})$. Then we can take the direct limit
\[
H_{lc}^n(X)=\lim\limits_{\overrightarrow{\epsilon\in\mathbb{R}^+}}((H_{\epsilon}^n(X))_{\epsilon\in\mathbb{R}^+}, (i_{\mu\epsilon }^*)_{\mu\leq\epsilon\in\mathbb{R}^+})=\bigoplus\limits_{\epsilon\in\mathbb{R}^+} H_{\epsilon}^n(X)/S
\]
where $S$ is generated by
$$\{q_{\mu}i_{\mu\epsilon}^*([\varphi_\epsilon])-q_{\epsilon}[\varphi_{\epsilon}]:[\varphi_{\epsilon}]\in H_{\epsilon}^n(X), q_{\epsilon} \ {\rm is \ the \ embedding}: H_{\epsilon}^n(X)\hookrightarrow\bigoplus\limits_{\epsilon\in\mathbb{R}^+} H_{\epsilon}^n(X)\}.$$

Next, we will check whether the $lc$-homology satisfies the Eilenberg-Steenrod axioms of homology theory. Obviously, the $lc$-homology theory satisfies the dimension axiom.
\begin{theorem}[Dimension Axiom]\label{Dimension axiom}
$H_n^{lc}(\{pt\})=0$ for all $n>0$.
\end{theorem}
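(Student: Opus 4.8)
The plan is to reduce the inverse limit to a computation at each fixed scale $\epsilon$, where the situation collapses to the ordinary singular homology of a point. First I would observe that any map into the one-point space $\{pt\}$ is vacuously $\epsilon$-continuous for every $\epsilon>0$: the defining condition on $\sup\{d_Y(f(x_1),f(x_2)):d_X(x_1,x_2)<\delta\}$ holds trivially because the metric on $\{pt\}$ is identically zero. Consequently there is exactly one $\epsilon$-singular $n$-simplex $\sigma_n:\Delta^n\rightarrow\{pt\}$ for each $n\geq 0$, and the $\epsilon$-singular chain complex $(C_*^{\epsilon}(\{pt\}),\partial)$ coincides, term by term and with the same boundary maps, with the ordinary singular chain complex of a point. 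In particular the scale parameter $\epsilon$ plays no role in the target $\{pt\}$.

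Second, I would carry out (or simply cite) the standard computation of this chain complex. Each $C_n^{\epsilon}(\{pt\})\cong\mathbb{Z}$ is generated by $\sigma_n$, and the boundary operator $\partial_n\sigma_n=\sum_{i=0}^n(-1)^i\sigma_{n-1}$ equals $\sigma_{n-1}$ when $n$ is even and $0$ when $n$ is odd. This is exactly the computation appearing inside the proof of the first proposition, yielding $H_n^{\epsilon}(\{pt\})=0$ for all $n>0$ and $H_0^{\epsilon}(\{pt\})\cong\mathbb{Z}$, for every fixed $\epsilon>0$.

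Finally, since $H_n^{\epsilon}(\{pt\})=0$ for every $\epsilon\in\mathbb{R}^+$ whenever $n>0$, the inverse system $((H_n^{\epsilon}(\{pt\}))_{\epsilon\in\mathbb{R}^+},(\varphi_{\mu\epsilon *}))$ is the constant system all of whose terms are the trivial group (and whose transition maps are therefore all zero). Its inverse limit is the trivial group, so $H_n^{lc}(\{pt\})=\varprojlim H_n^{\epsilon}(\{pt\})=0$ for all $n>0$, as claimed.

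There is essentially no obstacle in this argument; the only point requiring a moment's care is the first one, namely confirming that the $\epsilon$-continuity condition is vacuous when the target is $\{pt\}$, so that rescaling introduces no new chains or relations beyond those of the ordinary singular theory. Once that observation is in place, the result is immediate from the inverse-limit definition of $lc$-homology.
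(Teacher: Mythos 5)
Your proof is correct and follows the same route the paper intends: the paper declares the dimension axiom "obvious" and relies on the computation already carried out in its first proposition, namely that there is a unique $\epsilon$-singular $n$-simplex into a point for every $\epsilon$, so the $\epsilon$-chain complex is the usual singular chain complex of a point and $H_n^{\epsilon}(\{pt\})=0$ for $n>0$, whence the inverse limit vanishes. You have simply written out the details the paper omits, including the correct observation that $\epsilon$-continuity is vacuous when the target is a single point.
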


\begin{theorem}[Homotopy Axiom]\label{Homotopy axiom}
Let $f,g:X\rightarrow Y$ be two uniformly continuous maps. If $f$ and $g$ are uniformly homotopic, {\rm i.e.}, there is a family of uniformly continuous maps $f_t: X\rightarrow Y, t\in [0,1]$ such that $f_0=f$, $g_0=g$ and the associated map $F:X\times [0,1]\rightarrow Y$ given by $F(x,t)=f_t(x)$ is uniformly continuous, then $f^{lc}_{*}=g^{lc}_{*}: H_{n}^{lc}(X)\rightarrow H_{n}^{lc}(Y), \forall n\geq 0$.
\end{theorem}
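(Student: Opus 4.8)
The plan is to imitate the classical proof of homotopy invariance in singular homology via a prism operator, but with careful bookkeeping of the scales, using the uniform continuity of $F$ to keep the target scales under control. For an $\epsilon$-singular simplex $\sigma:\Delta^n\to X$ I would form the prism $\Delta^n\times I$, decompose it into the $(n+1)$-simplices $[v_0,\cdots,v_i,w_i,\cdots,w_n]$ with bottom vertices $v_j\in\Delta^n\times\{0\}$ and top vertices $w_j\in\Delta^n\times\{1\}$, exactly as in the proof of the diameter proposition above, and set
\[
P(\sigma)=\sum_i(-1)^i F\cdot(\sigma\times\id)|[v_0,\cdots,v_i,w_i,\cdots,w_n].
\]
First I would check that each summand is a genuine $\mu$-singular simplex of $Y$ for a scale $\mu=\mu(\epsilon)$ depending only on $\epsilon$ and $F$: if $\sigma$ is $\epsilon$-continuous with modulus $\delta\le\epsilon$, then $\sigma\times\id$ sends $\delta$-close points of the prism (in the product metric) to $\epsilon$-close points of $X\times I$, and since $F$ is uniformly continuous these map to $\mu$-close points of $Y$ with $\mu(\epsilon)\to 0$ as $\epsilon\to 0$. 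The affine reparametrizations identifying each prism piece with the standard $(n+1)$-simplex are bi-Lipschitz with dimension-dependent constants, so they alter the scale only by a bounded factor and do not affect the limit.

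Next I would establish, at each fixed scale, the chain-homotopy identity. Because $\partial$ is literally the classical singular boundary, the algebraic computation $\partial P+P\partial=g_\sharp-f_\sharp$ goes through verbatim. The point requiring attention is that $f_\sharp$, $g_\sharp$ and $P$ individually change scales: writing $\mu_f(\epsilon),\mu_g(\epsilon),\mu_P(\epsilon)$ for the respective target scales and $\mu=\max\{\mu_f(\epsilon),\mu_g(\epsilon),\mu_P(\epsilon)\}$, I would use the inclusion chain maps $\varphi_{\mu'\mu}$ to regard all three as landing in $C_*^{\mu}(Y)$, where the identity then holds as an identity of maps $C_n^\epsilon(X)\to C_n^{\mu}(Y)$. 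Since $f$ and $g$ are the restrictions of $F$ to $X\times\{0\}$ and $X\times\{1\}$, their moduli of continuity are dominated by that of $F$, so indeed $\mu=\mu(\epsilon)\to 0$ as $\epsilon\to 0$. Passing to homology, the chain homotopy yields $f_*^{\epsilon\mu}=g_*^{\epsilon\mu}\colon H_n^{\epsilon}(X)\to H_n^{\mu}(Y)$ for every $\epsilon$.

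Finally I would pass to the inverse limit. As in Proposition \ref{homlc}, the induced map $f_*^{lc}$ sends a compatible family $([a_\epsilon])$ to the family whose value at a target scale $\nu$ is obtained by choosing $\epsilon$ with $\mu(\epsilon)\le\nu$, applying $f_*^{\epsilon\mu(\epsilon)}$, and including via $\varphi_{\mu(\epsilon)\nu *}$ into $H_n^{\nu}(Y)$; uniform continuity of $F$ guarantees such an $\epsilon$ exists for every $\nu$ and that this construction is cofinal and well defined. Because $f_*^{\epsilon\mu}=g_*^{\epsilon\mu}$ already at the finite scale $\mu$, the two resulting families coincide after inclusion into $H_n^\nu(Y)$, and therefore $f_*^{lc}=g_*^{lc}$.

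I expect the \emph{main obstacle} to be the scale bookkeeping rather than the algebra: one must verify that a single scale $\mu(\epsilon)$, tending to $0$, simultaneously accommodates $f_\sharp$, $g_\sharp$ and the prism operator $P$, and that the finite-scale equalities assemble compatibly through the inclusion maps into a genuine equality of inverse-limit homomorphisms. This is precisely the place where uniform continuity of $F$ (as opposed to mere continuity) is indispensable, which explains the hypothesis of uniform homotopy.
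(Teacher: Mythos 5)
Your proposal is correct and follows essentially the same route as the paper: the prism operator $P(\sigma)=\sum_i(-1)^iF\cdot(\sigma\times\id)|[v_0,\cdots,v_i,w_i,\cdots,w_n]$, the verbatim classical identity $\partial P+P\partial=g_\sharp-f_\sharp$, a common target scale taken as the maximum of the scales produced by $f_\sharp$, $g_\sharp$ and $P$ (controlled by the uniform continuity of $F$), and passage to the inverse limit over a cofinal sequence of scales. Your extra remarks on the bi-Lipschitz reparametrizations of the prism pieces and on the moduli of $f,g$ being dominated by that of $F$ are details the paper leaves implicit, but they do not change the argument.
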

\begin{proof}
Let $\{\epsilon_n\}$ be a non-increasing sequence of $\mathbb{R}^{+}$ with $\lim_{n\to\infty}\epsilon_n=0.$ and we can take 
\[\begin{split}
 H_n^{lc}(X)&=\varprojlim((H_n^{\epsilon_m}(X))_{m=1}^{\infty}, (\varphi_{\epsilon_k,\epsilon_l *})_{l\leq k\in\mathbb{N}})\\&=\{ \overrightarrow{[a]}\in\prod\limits_{m=1}^{\infty}H_n^{\epsilon_m}(X): [a_{\epsilon_l}]=\varphi_{\epsilon_k,\epsilon_l*}([a_{\epsilon_k}]) {\rm \ for \ all} \
l\leq k\in\mathbb{N}\}. 
\end{split}\]
Then, we have 
\[\begin{split}
f_*^{lc}(\overrightarrow{[a]})=\{[\overrightarrow{f(a)}]:[f(a_{\epsilon_l})]=\varphi_{\mu_k,\mu_l*}([f(a_{\epsilon_k})]) {\rm \ for \ all} \
l\leq k\in\mathbb{N}\} \ {\rm and} \\
g_*^{lc}(\overrightarrow{[a]})=\{[\overrightarrow{g(a)}]:[g(a_{\epsilon_l})]=\varphi_{\omega_k,\omega_l*}([g(a_{\epsilon_k})]) {\rm \ for \ all} \
l\leq k\in\mathbb{N}\},
\end{split}\]
where an $\epsilon_*$-continuous map composed with $f$ and $g$ will be a $\mu_*$-continuous map and an $\omega_*$-continuous map respectively. Since a $\mu_k$-continuous map is naturally an $\omega_k$-continuous map if $\mu_k\leq\omega_k$, without loss of generality, we can assume $\mu_*=\omega_*$. 

Next, given a homotopy $F:X\times I\rightarrow Y$ from $f$ to $g$ with associated maps uniformly continuous and an $\epsilon_k$-singular simplex $\sigma:\Delta^n\rightarrow X$, we can form the composition $F\cdot(\sigma\times \mathrm{id}):\Delta^n\times I\rightarrow X\times I\rightarrow Y$ with associated maps $\theta_k$-continuous. As in the case of singular homology theory, we can construct a \emph{prism operator} which cuts $\Delta^n\times I$ into a sum of $(n+1)$-simplices in $\Delta^n\times I$ by
\[P(\sigma)=\sum_{i=0}^n(-1)^iF\cdot(\sigma\times\mathrm{id})|_{[v_0,\cdots,v_i,w_i,\cdots,w_n]}\]
where $\Delta^n\times {0}=[v_0,\cdots,v_n]$ and $\Delta^n\times {1}=[w_0,\cdots,w_n]$. Then $P(\sigma)\in C_{n+1}^{\theta_k}(Y)$. Pick $\tau_k=\max\{\mu_k,\theta_k\}$ and we have $P(\sigma), g_\sharp-f_\sharp-P\partial\in C_{n+1}^{\tau_k}(Y)$. As we did in singular homology theory, 
\[
\partial P(\sigma)=g_\sharp(\sigma)-f_\sharp(\sigma)-P\partial(\sigma).\]
Hence, $f$ and $g$ induce the same homomorphism $f_*=g_*: H_n^{\epsilon_k}(X)\rightarrow H_n^{\tau_k}(Y), k=1,2,\cdots$.
By taking the inverse limit, $f^{lc}_{*}=g^{lc}_{*}: H_{n}^{lc}(X)\rightarrow H_{n}^{lc}(Y), \forall n\geq 0$.
\end{proof}

\begin{corollary} The maps $f_*^{lc}:H_n^{lc}(X)\rightarrow H_n^{lc}(Y)$ induced by a uniform homotopy equivalence $f:X\rightarrow Y$ are isomorphisms for all $n$. In particular, if X is compact and contractible, then $\widetilde{H}_n^{lc}(X)=0$ for any $n$.
\end{corollary}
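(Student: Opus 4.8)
The plan is to deduce both assertions from the formal machinery already in place: I will run the standard categorical argument that a homotopy-invariant functor sends homotopy equivalences to isomorphisms, adapted to the uniform setting, and then specialize it to a point using compactness.

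First I would record that the assignment $f\mapsto f^{lc}_*$ is functorial on $\cM et^u$. Proposition \ref{homlc} already produces an induced map for each uniformly continuous $f$; what remains is $(g\circ f)^{lc}_*=g^{lc}_*\circ f^{lc}_*$ and $(\id_X)^{lc}_*=\id$. At the chain level the induced maps $f_\sharp^{\epsilon\mu}\colon C_n^\epsilon(X)\to C_n^\mu(Y)$ are given by postcomposition, and postcomposition is strictly functorial, with $g_\sharp f_\sharp=(gf)_\sharp$ and $(\id)_\sharp=\id$. The point to check is that these chain-level identities persist after passing to the inverse limit: since each $f_\sharp$ commutes with the scale-change inclusions $\varphi_{\mu\epsilon}$ (both are built from viewing a finer-scale simplex at a coarser scale), the induced map is well defined on $lc$-homology, and the cofinal sequences of scales produced by $f$, by $g$, and by $g\circ f$ are compatible under these inclusions, so $g^{lc}_*f^{lc}_*=(gf)^{lc}_*$ follows. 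The same remarks apply to the augmented complex, since $f_\sharp$ preserves the augmentation $\varepsilon$ ($f$ sends $0$-simplices to $0$-simplices), so reduced $lc$-homology is functorial as well.

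Given functoriality, Part 1 is formal. A uniform homotopy equivalence supplies a uniformly continuous $g\colon Y\to X$ with $g\circ f$ uniformly homotopic to $\id_X$ and $f\circ g$ uniformly homotopic to $\id_Y$. Applying functoriality together with the Homotopy Axiom (Theorem \ref{Homotopy axiom}) yields $g^{lc}_*\circ f^{lc}_*=(g\circ f)^{lc}_*=(\id_X)^{lc}_*=\id$ and, symmetrically, $f^{lc}_*\circ g^{lc}_*=\id$; hence $f^{lc}_*$ is an isomorphism with inverse $g^{lc}_*$. Because the prism operator used in Theorem \ref{Homotopy axiom} and the induced maps both respect the augmentation, the identical argument proves the statement for reduced $lc$-homology.

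For Part 2 the only place compactness is used is the observation that a compact contractible space is automatically \emph{uniformly} contractible. A contraction is a continuous $F\colon X\times I\to X$ with $F_0=\id_X$ and $F_1$ constant; since $X$ is compact, $X\times I$ is compact, so $F$ is automatically uniformly continuous, while the constant map and the identity are trivially uniformly continuous. Choosing the basepoint $x_0$ then exhibits $X$ as uniformly homotopy equivalent to $\{pt\}$, the two composites being $\id_{pt}$ and a map uniformly homotopic to $\id_X$. Part 1 in its reduced form gives $\widetilde{H}_n^{lc}(X)\cong\widetilde{H}_n^{lc}(\{pt\})$, and the latter vanishes in all degrees: for $n>0$ by the Dimension Axiom (Theorem \ref{Dimension axiom}), and for $n=0$ because the augmentation forces $\widetilde{H}_0^\epsilon(\{pt\})=0$ at every scale. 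I expect the main obstacle to be precisely the inverse-limit functoriality check in the second paragraph, since each induced chain map shifts the scale in a map-dependent way and one must confirm that the three families of scales interleave compatibly; once that bookkeeping is settled, the rest is the usual homotopy-invariance diagram chase.
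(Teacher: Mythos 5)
Your proof is correct and follows essentially the route the paper intends: the corollary is stated as an immediate consequence of the Homotopy Axiom via the standard functoriality argument, and your observation that compactness upgrades an ordinary contraction to a uniform one (so that the Homotopy Axiom applies, consistent with the paper's subsequent remark about $S^1\setminus\{p\}$) is exactly the point being used. The scale-bookkeeping check for $(g\circ f)^{lc}_*=g^{lc}_*\circ f^{lc}_*$ that you flag is the only nontrivial detail, and your resolution of it is sound.
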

\begin{remark}
Actually, we cannot simply omit the restriction "uniformly homotopic". Indeed, let $p\in S^1$ and consider a space $S^1\setminus \{p\}$. If $\epsilon$ is sufficiently small, then we cannot distinguish between $C_n(S^1)$ and $C_n^{\epsilon}(S^1\setminus\{p\})$. Naturally, we have $H_n(S^1)\cong H_n^{\epsilon}(S^1\setminus\{p\})$ (we will give a rigorous proof later in this section) since $B(p,\frac{1}{2}\epsilon)\setminus\{p\}$ is contractible under the "$\epsilon$-scale" and can be regarded as a point. By taking inverse limit, we have $H_1^{lc}(S^1\setminus\{p\})=\mathbb{Z}$, even though $S^1\setminus\{p\}$ is obviously contractible. The reason for this is that the homotopy map connecting the retraction of $S^1\setminus\{p\}$ to a point and the identity map is not uniformly continuous.
\end{remark}

\begin{theorem}[Excision Axiom]\label{Excision axiom}
Let $Z$ and $A$ be two subspaces of $X$ such that the closure of $Z$ is contained in the interior of $A$, {\rm i.e.}, $\bar{Z}\subseteq \mathring{A}$ and ${\rm dist}(\bar{Z},\partial A)>0$. Then the inclusion map $i: (X-Z, A-Z)\rightarrow (X,A)$ induces an isomorphism $i^{lc}_{*}: H_{n}^{lc}(X-Z, A-Z)\rightarrow H_{n}^{lc}(X,A)$ for all $n$. Equivalently, for subspaces $A,B\subseteq X$ with $X=\mathring{A}\cup\mathring{B}$, the inclusion $(B,A\cap B)\hookrightarrow (X,A)$ induces isomorphisms $H_n^{lc}(B,A\cap B)\rightarrow H_n^{lc}(X, A)$ for all $n$.
\end{theorem}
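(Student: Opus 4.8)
The plan is to adapt the classical ``small simplices'' proof of excision to the $\epsilon$-singular setting and then pass to the inverse limit. I would first replace the pair $(X-Z, A-Z)$ by the equivalent data of the cover $\cU=\{A,B\}$ with $B=X-Z$, so that $X=\mathring{A}\cup\mathring{B}$ and $\mathring{B}=X-\overline{Z}$; it then suffices to show that the inclusion $(B,A\cap B)\hookrightarrow(X,A)$ induces an $lc$-isomorphism. For each fixed $\epsilon$ let $C_n^{\epsilon,\cU}(X)\subseteq C_n^{\epsilon}(X)$ be the subgroup generated by those $\epsilon$-singular simplices whose image lies entirely in $A$ or entirely in $B$; note $C_n^{\epsilon,\cU}(X)=C_n^{\epsilon}(A)+C_n^{\epsilon}(B)$. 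The core of the argument is an $\epsilon$-analogue of the small-chains theorem: for all sufficiently small $\epsilon$, the inclusion $C_*^{\epsilon,\cU}(X)\hookrightarrow C_*^{\epsilon}(X)$ is a chain homotopy equivalence. Granting this, the second isomorphism theorem gives $C_*^{\epsilon,\cU}(X)/C_*^{\epsilon}(A)\cong C_*^{\epsilon}(B)/C_*^{\epsilon}(A\cap B)$, hence $H_n^{\epsilon}(X,A)\cong H_n^{\epsilon}(B,A\cap B)$ with the isomorphism induced by inclusion, which is $\epsilon$-level excision.

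The first thing to verify is that the subdivision machinery survives the weakening of continuity. Recall that barycentric subdivision $S$ and its chain homotopy $T$ (satisfying $\partial T+T\partial=\id-S$) are defined by $S(\sigma)=\sigma_{\sharp}(S\iota_n)$ and $T(\sigma)=\sigma_{\sharp}(T\iota_n)$, where $S\iota_n$ and $T\iota_n$ are fixed affine chains in the model simplex $\Delta^n$. Since every simplex appearing in these is an affine (hence Lipschitz) map into $\Delta^n$, the building blocks of $S(\sigma)$ and $T(\sigma)$ are $\sigma$ precomposed with a Lipschitz map; precomposition with a Lipschitz map preserves $\epsilon$-continuity, rescaling only the modulus $\delta$ while leaving $\epsilon$ unchanged. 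Hence $S$ and $T$ restrict to operators on $C_*^{\epsilon}(X)$ for every $\epsilon$, they satisfy the same chain-homotopy identity as in the classical case, and---being defined simplexwise and independently of $\epsilon$---they commute with the inclusions $\varphi_{\mu\epsilon}:C_*^{\mu}(X)\hookrightarrow C_*^{\epsilon}(X)$.

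The main obstacle is the step that makes a simplex $\cU$-small. In the classical proof one subdivides and invokes the Lebesgue number lemma on the open cover $\{\sigma^{-1}(\mathring{A}),\sigma^{-1}(\mathring{B})\}$ of the compact domain $\Delta^n$; here $\sigma$ is only $\epsilon$-continuous, so these preimages need not be open and this step fails verbatim. This is exactly where the distance hypothesis must be used. From $\dist(\overline{Z},\partial A)>0$ I would extract a Lebesgue radius $\lambda>0$ such that every subset of $X$ of diameter $<\lambda$ lies in $\mathring{A}$ or in $\mathring{B}=X-\overline{Z}$: a small-diameter set meeting $\overline{Z}$ stays inside $\mathring{A}$ by the distance gap, while a set missing $\overline{Z}$ already lies in $\mathring{B}$. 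I then restrict attention to scales $\epsilon<\lambda$. For such an $\epsilon$ and an $\epsilon$-singular simplex $\sigma$ with continuity modulus $\delta$, choose $m$ so large that the mesh of the $m$-fold barycentric subdivision of $\Delta^n$ is $<\delta$; then each simplex of $S^m(\sigma)$ is $\sigma$ restricted to a subsimplex of domain-diameter $<\delta$, so by $\epsilon$-continuity its image has $\diam\le\epsilon<\lambda$ and therefore lies in $\mathring{A}$ or $\mathring{B}$. Thus $S^m(\sigma)\in C_*^{\epsilon,\cU}(X)$, and the usual bookkeeping---a locally finite chain homotopy $D=\sum_{0\le i<m(\sigma)}TS^{i}$ assembled from the $S,T$ above---upgrades this to the asserted chain homotopy equivalence. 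The one point needing care beyond the classical argument is that $m=m(\sigma)$ depends on $\sigma$ through its modulus $\delta$, but this per-simplex dependence is handled exactly as in the continuous case.

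Finally I would assemble the scales. The isomorphisms $i_*^{\epsilon}:H_n^{\epsilon}(B,A\cap B)\to H_n^{\epsilon}(X,A)$ constructed above are defined for all $\epsilon<\lambda$ and, by the naturality of $S$, $T$, and the inclusions noted earlier, commute with the restriction homomorphisms $\varphi_{\mu\epsilon *}$; they therefore form an isomorphism of inverse systems indexed by $\{\epsilon:0<\epsilon<\lambda\}$. Since this index set is cofinal in $(\mathbb{R}^+,\le)$, the inverse limit computed over it agrees with $H_n^{lc}$, as already observed in the construction of $lc$-homology, and an isomorphism of inverse systems induces an isomorphism of inverse limits. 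Hence $i_*^{lc}:H_n^{lc}(X-Z,A-Z)\to H_n^{lc}(X,A)$ is an isomorphism, which is the claim; the two formulations in the statement correspond under $B=X-Z$.
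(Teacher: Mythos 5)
Your proposal is correct and follows essentially the same route as the paper: restrict to scales $\epsilon$ below the distance gap $\dist(\bar Z,\partial A)$, run the barycentric-subdivision/small-chains argument ($\partial T+T\partial=\id-S$, iterate to land in $C_*^{\epsilon}(A)+C_*^{\epsilon}(B)$, identify the quotients), and pass to the inverse limit over a cofinal set of scales. If anything, you are more careful than the paper at the one genuinely non-classical point --- why subdivision preserves $\epsilon$-continuity and why the distance hypothesis substitutes for the Lebesgue number lemma when $\sigma^{-1}(\mathring A),\sigma^{-1}(\mathring B)$ need not be open --- which the paper passes over with the terse remark ``since $\dist(A\cap B)>\epsilon$.''
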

\begin{proof}
For the cover $\{A,B\}$ with $X=A\cup B$ and $Z=X-B$, pick $0<\epsilon<{\rm dist}(\bar{Z},\partial A)$ and denote $C_n^{\epsilon\{A,B\}}(X)$ by $C_n^{\epsilon}(A+B)$, the formal sums of chains in $A$ and chains in $B$ (here $\epsilon$ is globally fixed for this space). As we do for singular homology theory, by barycentric subdivision of chains, we can construct a chain map $S:C_n^{\epsilon}(X)\rightarrow C_n^{\epsilon}(X)$ and a chain homotopy $T:C_n^{\epsilon}(X)\rightarrow C_{n+1}^{\epsilon}(X)$, s.t.
$$
\partial T+T\partial={\rm id}-S.
$$
In addition, for the cover ${A,B}$ of $X$ and any $\epsilon$-simplex $\sigma:\Delta^n\rightarrow X$, there exists $m\in\mathbb{N}$, s.t. $S^m\sigma\in C_n^{\epsilon\{A,B\}}(X)$ since $\dist (A\cap B)>\epsilon$. It will follow that $S$, $T$ restrict to maps on $C_n^{\epsilon\{A,B\}}(X)$. Let $D_m=\sum_{0\leq i<m}TS^i$ and it satisfies 
$$\partial D_m+D_m\partial={\rm id}-S^m.$$
Hence the iterate $S^m$ is chain homotopic to ${\rm id}$. 

Then the inclusion $C_n^{\epsilon}(A+B)/C_n^{\epsilon}(A)\hookrightarrow C_n^{\epsilon}(X)/C_n^{\epsilon}(A)$ induces an isomorphism on homology. In addition, we have a natural isomorphism $C_n^{\epsilon}(B)/C_n^{\epsilon}(A \cap B)\rightarrow C_n^{\epsilon}(A+B)/C_n^{\epsilon}(A)$ since both sides are free abelian groups generated by the $\epsilon$-singular $n$-simplices in $B$ that are not contained in $A$. Composing these two isomorphisms on homology, we get $H_n^{\epsilon}(B,A\cap B)\cong H_n^{\epsilon}(X, A)$ for all $n$. By taking the inverse limit, $H_n^{lc}(B,A\cap B)\cong H_n^{lc}(X, A)$.
\end{proof}

As a corollary of excision theorem, we naturally have:
\begin{theorem}[Refined Additivity Axiom]\label{Additivity axiom}
Let $X=\bigsqcup\limits_{\lambda\in\Lambda}X_{\lambda}$. If there exists $\epsilon>0$ such that for any $k,\lambda\in\Lambda$, $\dist (X_{k},X_{\lambda})>\epsilon$, then
$H_n^{lc}(X)\cong \bigoplus\limits_{\lambda\in\Lambda}H_n^{lc}(X_{\lambda})$ for all $n$.
\end{theorem}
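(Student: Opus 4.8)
The plan is to reduce everything to a chain-level splitting that is already visible at every sufficiently fine scale, and then push that splitting through the inverse limit. The geometric input is that the uniform gap between the pieces confines each scaled simplex to a single piece.

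First I would fix the $\epsilon>0$ from the hypothesis and prove the geometric core: for every scale $0<\mu\le\epsilon$, each $\mu$-singular $n$-simplex $\sigma:\Delta^n\to X$ has image contained in a single piece $X_\lambda$. Indeed, $\mu$-continuity provides $\delta>0$ with $d_X(\sigma(x_1),\sigma(x_2))<\mu\le\epsilon$ whenever $d(x_1,x_2)<\delta$. Since $\Delta^n$ is path-connected and compact, any two of its points are joined by a finite chain of points at consecutive distance $<\delta$, so their images lie at consecutive distance $<\epsilon$. Because $\dist(X_k,X_\lambda)>\epsilon$ for $k\ne\lambda$, consecutive images can never jump between distinct pieces, which forces $\sigma(\Delta^n)\subseteq X_\lambda$ for exactly one $\lambda$.

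Consequently the generators of $C_n^{\mu}(X)$ partition according to the piece containing their image, giving a splitting into subcomplexes $C_\ast^{\mu}(X)=\bigoplus_\lambda C_\ast^{\mu}(X_\lambda)$ for every $\mu\le\epsilon$, with $\partial$ preserving each summand. Since homology commutes with direct sums of chain complexes, $H_n^{\mu}(X)\cong\bigoplus_\lambda H_n^{\mu}(X_\lambda)$. Moreover, for $\mu\le\mu'\le\epsilon$ the inclusion chain map $\varphi_{\mu\mu'}$ sends a simplex in $X_\lambda$ to the same simplex in $X_\lambda$, so $\varphi_{\mu\mu'\ast}$ is the direct sum of the per-piece maps $\varphi_{\mu\mu'\ast}^{\lambda}$; thus the entire inverse system below scale $\epsilon$ splits as a direct sum of the per-piece inverse systems.

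Finally I would take the inverse limit along a cofinal sequence $\epsilon_m\to 0$ chosen with all $\epsilon_m\le\epsilon$, which is legitimate because such sequences are cofinal in $(\mathbb{R}^+,\le)$ and the limit may be computed along any cofinal sequence. The remaining issue is to commute $\varprojlim_m$ with $\bigoplus_\lambda$, and this is exactly where I expect the main obstacle to lie: inverse limits commute with products but not with infinite direct sums in general, and one must exploit the \emph{diagonal} (component-preserving) structure of the transition maps. For finite $\Lambda$ the conclusion is immediate, since then $\bigoplus_\lambda=\prod_\lambda$ and $\varprojlim$ commutes with finite products, giving $H_n^{lc}(X)\cong\bigoplus_\lambda H_n^{lc}(X_\lambda)$ at once. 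For infinite $\Lambda$ I would build the natural injection $\bigoplus_\lambda H_n^{lc}(X_\lambda)\hookrightarrow H_n^{lc}(X)$ by coordinatewise assembly, and then attack surjectivity by support-tracking: a compatible family $(a_m)_m$ with $a_m\in\bigoplus_\lambda H_n^{\epsilon_m}(X_\lambda)$ restricts in each coordinate $\lambda$ to an element $b_\lambda\in H_n^{lc}(X_\lambda)$, and because the transition maps preserve coordinates the support of $a_m$ can only grow as $m$ increases. The crux is to control the \emph{total} support $\bigcup_m \operatorname{supp}(a_m)$, ensuring $(b_\lambda)_\lambda$ lands in the direct sum and not merely the product; this finiteness is the single delicate point and the place where a genuine obstruction (or a need to invoke stability of the pieces, as discussed later for manifolds) could appear, so it is the step I would expect to demand the most care.
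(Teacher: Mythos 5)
Your reduction is sound as far as it goes, and it is in fact more explicit than the paper, which offers no argument at all beyond the phrase ``as a corollary of the excision theorem.'' The geometric core --- that for $\mu\le\epsilon$ a $\mu$-singular simplex cannot straddle two pieces, so that $C_*^{\mu}(X)=\bigoplus_{\lambda}C_*^{\mu}(X_\lambda)$ as chain complexes and the transition maps of the inverse system are diagonal --- is exactly the right chain-level statement, and your observation that $\varprojlim$ commutes with finite products disposes of the case of finite $\Lambda$ completely.

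However, the step you flag as ``the single delicate point'' is not a technicality to be worked around: it is a genuine obstruction, and the theorem as stated is false for infinite $\Lambda$. Take $\Lambda=\mathbb{N}$ and let $X_\lambda$ be a round circle of diameter $1/\lambda$, the pieces placed at mutual distance $>1$. By Proposition 2.3, $H_1^{\epsilon}(X_\lambda)=0$ once $\epsilon>1/\lambda$, while the argument of Theorem 2.17 gives $H_1^{\epsilon}(X_\lambda)\cong\mathbb{Z}$, with isomorphic transition maps, for all $\epsilon$ below a threshold $\delta_\lambda>0$; hence $H_1^{lc}(X_\lambda)\cong\mathbb{Z}$ for every $\lambda$, so the right-hand side of the theorem is $\bigoplus_{\mathbb{N}}\mathbb{Z}$. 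On the other hand, at each fixed scale $\epsilon_m$ only the finitely many $\lambda$ with $1/\lambda\ge\epsilon_m$ can contribute, so the finite-support condition in $\bigoplus_\lambda H_1^{\epsilon_m}(X_\lambda)$ is vacuous, and a compatible family may switch on a new nonzero coordinate each time the scale drops below the diameter of the next circle. Consequently $H_1^{lc}(X)=\varprojlim_m\bigoplus_\lambda H_1^{\epsilon_m}(X_\lambda)\cong\prod_{\lambda\in\mathbb{N}}\mathbb{Z}$, which is uncountable and not isomorphic to the countable group $\bigoplus_{\mathbb{N}}\mathbb{Z}$. So your support-tracking program cannot succeed in general: the total support of a compatible family really can be infinite. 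To salvage the statement one must either restrict to finite $\Lambda$, replace the right-hand side by $\varprojlim_m\bigoplus_\lambda H_n^{\epsilon_m}(X_\lambda)$ (which sits between the direct sum and the direct product), or impose a uniform stability hypothesis on the pieces so that no new classes appear as the scale shrinks.
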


\begin{example} $X=\{\displaystyle\frac{1}{2^k};k=0,1,2,\cdots\}\cup\{0\}$.

Take $\epsilon_k=1/2^{k+1}$. For a fixed $\epsilon_k$, $H_0^{\epsilon_k}(X)$ is the free abelian group generated by elements of $X$ that are $\geq 1/2^{k+1}$ and $\{0\}$, i.e., elements of $H_0^{\epsilon_k}(X)$ are given by $(a_0,a_1,\cdots,a_{k+2})\in\mathbb{Z}^{k+3}$. Hence the inverse limit of $(H_0^{\epsilon_k}(X))_{n\in\mathbb{N}}$ is $H_0^{lc}(X)=\mathbb{Z}^{|X|}$, the direct product of $\mathbb{Z}$, where $|X|$ denotes the cardinality of $X$. 

Next, for $\epsilon_k$, $X$ can be written as $X=X_1^{(k)}\bigsqcup X_2^{(k)}$ with 
$$
X_1^{(k)}=\{0,1,\frac{1}{2},\cdots,\frac{1}{2^k}\},X_2^{(k)}=\{\frac{1}{2^{k+1}},\frac{1}{2^{k+2}},
\cdots\}
$$
and for $n\geq 1$, $H_n^{\epsilon_k}(X)\cong H_n^{\epsilon_k}(X_1^{(k)})\oplus H_n^{\epsilon_k}(X_2^{(k)})$. By dimension axiom, 
$$
H_n^{\epsilon_k}(X_n^{(k)})\cong  H_n^{\epsilon_k}(\{0\})\oplus H_n^{\epsilon_k}(\{1\})\oplus\cdots\oplus H_n^{\epsilon_k}(\{\frac{1}{2^k}\})=0.
$$
Since the diameter of $X_2^{(k)}$ is less than $\epsilon_k$, by proposition 2.3, $H_n^{\epsilon_k}(X_2^{(k)})=0$. Therefore, $H_n^{\epsilon_k}(X)=0$ holds for all $\epsilon_k$ and by taking the inverse limit, $H_n^{lc}(X)=0$ for $n\geq 1$.

\end{example}

Consider the commutative diagram
$$
\xymatrix{0\ar[r] & C_n^{\epsilon}(A)\ar[r]^{i_{\epsilon}} \ar[d]^{\partial} & C_n^{\epsilon}(X) \ar[r]^{j_{\epsilon}}\ar[d]^{\partial} & C_n^{\epsilon}(X,A)\ar[r]\ar[d]^{\partial} & 0\\
0\ar[r] & C_{n-1}^{\epsilon}(A)\ar[r]_{i_{\epsilon}} & C_{n-1}^{\epsilon}(X)\ar[r]_{j_{\epsilon}} & C_{n-1}^{\epsilon}(X,A)\ar[r] & 0
}
$$
where $i_{\epsilon}$ is inclusion and $j_{\epsilon}$ is the quotient map. By the Snake Lemma, we obtain an induced long exact sequence in $\epsilon$-homology:
$$
\xymatrix{\cdots \ar[r] & H_{n}^{\epsilon}(A) \ar[r]^{i_{\epsilon*}} & H_{n}^{\epsilon}(X) \ar[r]^{j_{\epsilon*}} & H_{n}^{\epsilon}(X,A) \ar[r]^{\partial_{\epsilon}} & H_{n-1}^{\epsilon}(A) \ar[r] & \cdots}
$$
Then we have the following proposition.
\begin{proposition}
Let $A$ be a subspace of $X$. Then we have $H_n^{lc}(A)\cong H_n^{lc}(\bar{A})$ for all $n$.
\end{proposition}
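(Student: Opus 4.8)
The plan is to show that the inclusion $\iota\colon A\hookrightarrow\bar{A}$ induces an isomorphism $\iota^\epsilon_*\colon H_n^\epsilon(A)\to H_n^\epsilon(\bar{A})$ at \emph{every} fixed scale $\epsilon>0$, and then to pass to the inverse limit. Since $A$ carries the subspace metric, $\iota$ is an isometric embedding, so an $\epsilon$-singular simplex of $A$ is literally an $\epsilon$-singular simplex of $\bar{A}$ with the same modulus, and the chain inclusions $C_n^\epsilon(A)\hookrightarrow C_n^\epsilon(\bar{A})$ are scale-preserving and commute with the bonding maps $\varphi_{\mu\epsilon}$. Thus $\{\iota^\epsilon_*\}_\epsilon$ is a morphism from the inverse system for $A$ to that for $\bar{A}$; once each $\iota^\epsilon_*$ is an isomorphism, the inverses commute with the bonding maps as well and assemble into the inverse of $\iota^{lc}_*=\varprojlim\iota^\epsilon_*$, giving $H_n^{lc}(A)\cong H_n^{lc}(\bar{A})$.

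The approximation tool I would use is a retraction-like \emph{choice function} $\pi=\pi_\eta\colon\bar{A}\to A$ with $\pi|_A=\id$ and $d(x,\pi(x))\le\eta$ for all $x\in\bar{A}$; such a $\pi$ exists for every $\eta>0$ because $A$ is dense in $\bar{A}$. Composition $\sigma\mapsto\pi\sigma$ is a chain map $\pi_\sharp$, since $\pi$ is an honest function and so commutes with taking faces. The essential point — the one that keeps the whole argument inside a single scale — is that $\epsilon$-continuity is defined by a \emph{strict} inequality: if $\sigma$ has modulus $\delta$ with $S(\sigma):=\sup\{d(\sigma(x_1),\sigma(x_2)):d(x_1,x_2)<\delta\}<\epsilon$, then for $d(x_1,x_2)<\delta$ one gets $d(\pi\sigma(x_1),\pi\sigma(x_2))\le S(\sigma)+2\eta$, so $\pi\sigma$ is again $\epsilon$-continuous as soon as $\eta<(\epsilon-S(\sigma))/2$. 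Because any chain is a \emph{finite} sum of simplices, a single $\eta$ (the minimum of the finitely many slacks) makes $\pi_\sharp$ carry that chain into $C_n^\epsilon(A)$.

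To compare $\sigma$ with $\pi\sigma$ I would use the prism operator built from the ``jump homotopy'' $H\colon\Delta^n\times I\to\bar{A}$ with $H(\cdot,s)=\sigma$ for $s\le\tfrac12$ and $H(\cdot,s)=\pi\sigma$ for $s>\tfrac12$; the discontinuity at $s=\tfrac12$ is harmless precisely because $\epsilon$-continuity tolerates jumps of size $<\epsilon$, and one checks as above that $H$, hence each simplex of $P(\sigma)$, is $\epsilon$-continuous. The purely simplicial prism identity $\partial P+P\partial=\pi_\sharp-\id$ then holds in $C_*^\epsilon(\bar{A})$. Surjectivity of $\iota^\epsilon_*$ follows: for a cycle $z\in Z_n^\epsilon(\bar{A})$, choosing $\eta$ small relative to $z$ yields a cycle $\pi_\sharp z\in Z_n^\epsilon(A)$ with $\pi_\sharp z-z=\partial P(z)$, so $\iota^\epsilon_*[\pi_\sharp z]=[z]$. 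Injectivity is the dual computation: if $\iota(z')=\partial w$ with $z'\in Z_n^\epsilon(A)$ and $w\in C_{n+1}^\epsilon(\bar{A})$, pick $\eta$ small for $w$; since $\pi|_A=\id$ we have $\pi_\sharp z'=z'$, whence $z'=\partial(\pi_\sharp w)$ with $\pi_\sharp w\in C_{n+1}^\epsilon(A)$, i.e. $[z']=0$.

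The main obstacle is exactly this scale bookkeeping: a naive approximation moves points by $\eta$ and threatens to push an $\epsilon$-simplex out to scale $\epsilon+2\eta$, which would only yield a map of inverse \emph{systems} rather than a level-wise isomorphism. What rescues the argument is the combination of the strict inequality in the definition of $\epsilon$-continuity (which leaves positive slack $\epsilon-S(\sigma)$) with the finiteness of chains, allowing one $\eta$ to serve an entire chain while staying below $\epsilon$. As a sanity check, the proposition is consistent with the earlier Remark: $\overline{S^1\setminus\{p\}}=S^1$, and indeed $H_1^{lc}(S^1\setminus\{p\})=\mathbb{Z}=H_1^{lc}(S^1)$.
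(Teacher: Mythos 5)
Your proof is correct, and it reaches the level-wise isomorphism $H_n^\epsilon(A)\cong H_n^\epsilon(\bar A)$ by a different formal route than the paper, though both hinge on the same two facts: density of $A$ in $\bar A$, and the positive slack $\epsilon-S(\sigma)$ left by the strict inequality in the definition of $\epsilon$-continuity. The paper instead works with the relative groups: it invokes the long exact sequence of the pair $(\bar A,A)$ at fixed $\epsilon$ (already established via the Snake Lemma) and reduces everything to showing $H_n^\epsilon(\bar A,A)=0$, which it does by taking a relative cycle $\alpha$, building a prism $\beta_i:\Delta^n\times I\to\bar A$ that fixes the faces and perturbs interior values lying outside $A$ by less than $(\epsilon-\omega_i)/3$, and concluding $\partial\beta-\alpha\in C_n^\epsilon(A)$. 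Your argument replaces this with an explicit $\eta$-choice function $\pi_\eta:\bar A\to A$ fixing $A$, the induced chain map $\pi_\sharp$, and the jump-homotopy prism identity $\partial P+P\partial=\pi_\sharp-\id$, from which surjectivity and injectivity of $\iota^\epsilon_*$ fall out directly. What your version buys is self-containedness and precision: the ``perturbation'' is a concrete function, the scale bookkeeping ($\eta<(\epsilon-S(\sigma))/2$, one $\eta$ per finite chain) is explicit, and you never need the relative theory or the exactness of the $\epsilon$-level sequence; it also makes transparent why the discontinuous $\pi_\eta$ and the discontinuous homotopy are legitimate, namely that $\epsilon$-continuity tolerates jumps below the scale. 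What the paper's version buys is brevity, since the long exact sequence at fixed $\epsilon$ is already on the table and only the single vanishing statement $H_n^\epsilon(\bar A,A)=0$ needs to be checked. Your closing observation that both levelwise isomorphisms and their inverses commute with the bonding maps, hence pass to the inverse limit, matches the paper's final step.
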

\begin{proof}
First we claim that for any $\epsilon>0$, $H_n^{\epsilon}(A)\cong H_n^{\epsilon}(\bar{A})$. Indeed, by the long exact sequence in $\epsilon$-homology
$$
\xymatrix{\cdots \ar[r] & H_{n+1}^{\epsilon}(\bar{A},A) \ar[r]^{\partial_{\epsilon}} & H_{n}^{\epsilon}(A) \ar[r]^{i_{\epsilon*}} & H_{n}^{\epsilon}(\bar{A}) \ar[r]^{j_{\epsilon*}} & H_{n}^{\epsilon}(\bar{A},A) \ar[r] & \cdots}
$$
it suffices to show that $H_n^{\epsilon}(\bar{A},A)=0$ for all $n$.

Let $[\alpha]\in H_n^{\epsilon}(\bar{A},A)$ where $\alpha\in C_n^{\epsilon}(\bar{A})$ and $\partial\alpha\in C_{n-1}^{\epsilon}(A)$. Write $\alpha=\sum_{i=1}^k n_i\alpha_i$ where $\alpha_i:\Delta^n\rightarrow\bar{A}$ is $\epsilon$-continuous. For each $\alpha_i$, $\exists \delta_i>0$, s.t. $\omega_i=\sup\{d(\alpha_i(x_1), \alpha_i(x_2)):d(x_1,x_2)<\delta_i\}<\epsilon$. By definition, 
$$
\partial\alpha=\sum_{i=1}^k\sum_{j=0}^nn_i\alpha_i|[v_0,\cdots,\hat{v_j},\cdots,v_n]
$$
which is a formal sum of $\epsilon$-continuous maps. Hence the restrictions of $\alpha_i:\Delta^n\rightarrow\bar{A}$ to the faces of $\Delta^n$ consist of $\epsilon$-continuous maps whose images are contained in either $\bar{A}$ or $A$, and in the formal sum of $\partial\alpha$, all of those $\epsilon$-continuous maps whose images intersect $\partial A$ are cancelled. 

Then, we construct $\beta_i:\Delta^n\times I\rightarrow\bar{A}$ by 
$$
\beta_i(\sum_{\substack{l=0\\l\neq j}}^n\lambda_lv_l,t)\equiv\alpha_i(\sum_{\substack{l=0\\l\neq j}}^n\lambda_lv_l)
$$
where $\sum_{\substack{l=0\\l\neq j}}^n\lambda_l=1$ for $j=0,1,\cdots,n$ and 
$$
\beta_i(\sum_{l=0}^n\lambda_lv_l,0)=\alpha_i(\sum_{l=0}^n\lambda_lv_l)
$$
where $\sum_{l=0}^n\lambda_l=1$. 

Moreover, for other points $(x,t)\in\Delta^n\times I$, if $\alpha_i(x)\in A$, then we let $\beta_i(x,t)=\alpha_i(x)$. On the other hand, if $\alpha_i(x)\notin A$, we do a perturbation for $\alpha_i(x)$ under the scale of $(\epsilon-\omega_i)/3$ and we obtain $\widetilde{\alpha_i}(x)$. Then we let $\beta_i(x,t)=\widetilde{\alpha_i}(x)$. Clearly, $\beta_i$ is still $\epsilon$-continuous and since $\Delta^n\times I$ is a simplicial complex, $\beta_i$ can be seen as a formal sum of $\epsilon$-singular $n+1$-simplices. Let $\beta=\sum_{i=1}^kn_i\beta_i\in C_{n+1}^{\epsilon}(\bar{A})$. Then $\del\beta-\alpha\in C_n^{\epsilon}(A)$. So $[\alpha]=0$ in $H_n^{\epsilon}(\bar{A},A)$ and $H_n^{\epsilon}(A)\cong H_n^{\epsilon}(\bar{A})$ for $\forall \epsilon>0$.

Finally, by taking the inverse limit, $H_n^{lc}(A)\cong H_n^{lc}(\bar{A})$ for all $n$.
\end{proof}

But unfortunately, the inverse limit functor preserving left exactness is still not exact in our case.

\begin{example}
Let $A$ be the set of positive integers and let $X$ be the set of all $n+\frac in$ where $i,n$ are integers with $0\le i< n$, i.e.,
\[
	X=\{1,\  2, 2\tfrac12,\ 3, 3\tfrac13,3\tfrac23,\ 4,4\tfrac14,4\tfrac12,4\tfrac34,\ 5,5\tfrac15,\cdots\}
\]

Consider the short exact sequence 
$$
\xymatrix{0 \ar[r] & \Ker i_{\epsilon*} \ar[r] & H_{0}^{\epsilon}(A) \ar[r] & \Ima i_{\epsilon*} \ar[r]& 0}
$$
where $H_0^{\epsilon}(A)$ is the same for all $0<\epsilon<1$, which is the direct sum of countably infinitely many copies of $\mathbb{Z}$ and $H_0^{\epsilon}(X)$ is a finitely generated abelian group. Without loss of generality, take $\epsilon_n=1/n$. Then elements of $i_{\epsilon_n*}(H_0^{\epsilon_n}(A))\subset H_0^{\epsilon_n}(X)$ are given by a sequence of integers $(a_0,a_1,\cdots,a_n)$, i.e., $i_{\epsilon_n*}(H_0^{\epsilon_n}(A))$ is generated by only $n$ generators.

Let $([a]_{\epsilon_n})\in\varprojlim\Ima(i_{\epsilon_n})$, which can be written as
\[
((a_1,a_2),(a_1,a_2,a_3),\cdots,(a_1,a_2,\cdots, a_m),\cdots)
\]
and can have countably infinite length (the number of nonzero coefficients). 
But the element of $\varprojlim H_0^{\epsilon_n}(A)$ can be characterized by the element of the direct sum of countably infinitely many copies of $\mathbb{Z}$, so its length can only be finite. Therefore, there is no element of $H_0^{lc}(A)$ that can go to the elements with infinitely many nonzero coordinates in $\varprojlim\Ima i_{\epsilon_n*}$, i.e., $H_0^{lc}(A)\rightarrow\varprojlim\Ima i_{\epsilon_n*}$ is not surjective and we fail to show the exactness at $H_0^{\epsilon}(X)$. 
\end{example}

Furthermore, we can give a counterexample showing that the $lc$-homology doesn't satisfy the exactness axiom.

\begin{example}
Let $B$ be the set of all $n+\frac in$ where $i,n$ are  integers with $0\le i< n$, i.e.,
\[
	B=\{1,\  2, 2\tfrac12,\ 3, 3\tfrac13,3\tfrac23,\ 4,4\tfrac14,4\tfrac12,4\tfrac34,\ 5,5\tfrac15,\cdots\}.
\]
Take $\epsilon_n$ to be such that $1/n<\epsilon_n\le 1/(n-1)$. Clearly, $H_0^{\epsilon_n}(B)$ is the free abelian group generated by the elements of $B$ which are $\le n$. For instance, for $n=3$, $H_0^{\epsilon_3}(B)=\mathbb Z^4$, which is freely generated by $\{[1], [2], [2.5], [3]=[3\frac13]=[3\frac23]\}$. Thus, elements of $H_0^{\epsilon_3}(B)$ are given by the set of tuples $\{(a_1,a_2, a_{5/2},a_3)\in \mathbb Z^4\}$. Add a ``dummy variable'' $a_0=-(a_1+a_2+ a_{5/2}+a_3)$ and we obtain a sequence of five integers $(a_0,a_1,a_2, a_{5/2},a_3)$ whose sum is $0$. 

Take $B_+=B\coprod \{0\}$. Then the inverse limit of $(H_0^{\epsilon_n})_{n\in\mathbb{N}}$ is $H_0^{lc}(B)=\mathbb Z^{|B_+|}$, the set of all infinite sequences of integers $(a_0,a_1,a_2, a_{5/2},\dots)$ indexed by the elements of $B_+$.

Let $A=B\times \{0,1\}=B\coprod B$, the disjoint union of two copies of $B$ with distance $>\epsilon_n$. Then $H_0^{lc}(A)=H_0^{lc}(B)\oplus H_0^{lc}(B)$ whose elements are given by pairs of infinite sequences of integers
$$\left((a_0,a_1,a_2, a_{5/2},\dots),(b_0,b_1,b_2, b_{5/2},\dots)
\right).$$

Take $X=A\cup\{1,2,\cdots\}\times I$. For any $\epsilon<1$, $H_1^\epsilon(X,A)$ is the free abelian group generated by the set of vertical line segments in $X$, which is the direct sum of countable copies of $\mathbb{Z}$. Therefore, $H_1^{\epsilon}(X,A)\cong H_1^{lc}(X,A)$ and elements in $H_1^{lc}(X,A)$ can only have finite nonzero coefficients. 

Note that the homomorphism $H_0^{lc}(A)\to H_0^{lc}(X)$ adds together the coefficients $a_n,b_n$ for all integers $n$, so the kernel consists of all pairs of infinite sequence $(a_\ast,b_\ast)$ such that $a_k+b_k=0$ for all integers $k$ and $a_x=b_x=0$ for all $x\in B$ that are not integers. So the kernel of $H_0^{lc}(A)\rightarrow H_0^{lc}(X)$ is a product of infinitely many copies of $\mathbb Z$, i.e., it can have countably infinite nonzero coefficients that cannot be the image of $H_1^{lc}(X,A)$. 

Therefore, the sequence $$\xymatrix{\cdots\ar[r]& H_1^{lc}(X,A) \ar[r] & H_0^{lc}(A) \ar[r] & H_0^{lc}(X)}
$$ 
is not exact at $H_0^{lc}(A)$.
\end{example}

\begin{remark}
However, $lc$-cohomology satisfies the exactness axiom. Indeed, the $lc$-cohomology sequence
$$
\xymatrix {\cdots\ar[r] & H^n_{lc}(X) \ar[r] & H^n_{lc}(A) \ar[r] & H^{n+1}_{lc}(X,A) \ar[r]& H^{n+1}_{lc}(X)\ar[r]&\cdots}
$$
with any coefficients is exact since it is the direct limit of the exact sequences
$$
\xymatrix {\cdots\ar[r] & H^n_{\epsilon}(X) \ar[r] & H^n_{\epsilon}(A) \ar[r] & H^{n+1}_{\epsilon}(X,A) \ar[r]& H^{n+1}_{\epsilon}(X)\ar[r]&\cdots}
$$
and the direct limit serves as an exact functor.

In particular, for our counterexample, the $lc$-cohomologies are all countably generated since they are countable direct limits of countably generated groups.

Therefore, the Universal Coefficient Theorem does not hold for $lc$-cohomology theory in general. 
\end{remark}

To end this section, we will show the equivalence of $lc$-homology and singular homology in one special case.
\begin{theorem}\label{Equivalence}
Let $(X,d)$ be a compact $n$-dimensional manifold endowed with a Riemannian structure. Then the homomorphisms $H_n(X)\rightarrow H_n^{lc}(X)$ are isomorphisms for all $n$.
\end{theorem}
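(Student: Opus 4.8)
\emph{Proof proposal.} The plan is to reduce the statement to a single-scale comparison and then pass to the limit. Every continuous singular simplex $\sigma\colon\Delta^n\to X$ has compact domain, hence is uniformly continuous and therefore $\epsilon$-continuous for \emph{every} $\epsilon>0$; this produces inclusion chain maps $\iota^\epsilon\colon C_*(X)\hookrightarrow C_*^\epsilon(X)$ that are compatible with the transition maps $\varphi_{\mu\epsilon}$, and the induced compatible family $(\iota^\epsilon_*)$ is exactly the map $H_n(X)\to H_n^{lc}(X)=\varprojlim_\epsilon H_n^\epsilon(X)$ of the theorem. So it suffices to exhibit a scale $\epsilon_0>0$ such that $\iota^\epsilon_*\colon H_n(X)\to H_n^\epsilon(X)$ is an isomorphism for every $0<\epsilon<\epsilon_0$ and all $n$. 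Granting this, for $\mu\le\epsilon<\epsilon_0$ the relation $\varphi_{\mu\epsilon*}\circ\iota^\mu_*=\iota^\epsilon_*$ forces every transition map in the cofinal range $\epsilon<\epsilon_0$ to be an isomorphism; the inverse system stabilizes, and the family $(\iota^\epsilon_*)$ identifies $H_n(X)$ with the inverse limit. (This simultaneously establishes the stability of $lc$-homology promised for Riemannian manifolds.)

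For the single-scale isomorphism I would use the geometry of the compact Riemannian manifold. Let $\rho>0$ be the \emph{convexity radius}: for $r<\rho$ every ball $B(p,r)$ is strongly geodesically convex, so any two of its points are joined by a unique minimizing geodesic lying in the ball and varying continuously with its endpoints. Set $\epsilon_0=\rho$ and fix $0<\epsilon<\rho$. I would build a \emph{straightening} chain map $\Phi\colon C_*^\epsilon(X)\to C_*(X)$ as follows. Given an $\epsilon$-singular simplex $\sigma$, choose $m=m(\sigma)$ so large that each simplex of the $m$-fold barycentric subdivision $\mathrm{sd}^m\Delta^n$ has diameter below the $\delta$ witnessing $\epsilon$-continuity of $\sigma$; then $\sigma$ sends each such subsimplex into a set of diameter $<\epsilon<\rho$, hence into a single convex ball. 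Sampling $\sigma$ at the subdivision vertices and replacing each subsimplex by the geodesic (straight) simplex on the sampled vertices via iterated geodesic coning---well defined and continuous because all vertices lie in one convex ball---yields a genuine continuous chain $\Phi(\sigma)\in C_n(X)$, compatible with faces.

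I would then verify that $\Phi$ is a chain homotopy inverse to $\iota^\epsilon$. The composite $\Phi\circ\iota^\epsilon$ is the usual ``subdivide and straighten'' operator on continuous chains, which is chain homotopic to the identity of $C_*(X)$ by the standard convexity (straight-line-homotopy) argument. For $\iota^\epsilon\circ\Phi$ I would use that barycentric subdivision is chain homotopic to the identity on $C_*^\epsilon(X)$ via the operators $S,T$ already constructed in the proof of Theorem~\ref{Excision axiom}, composed with the straight-line homotopy $H(x,s)$ that geodesically interpolates, on each small subsimplex, between $\sigma(x)$ and its straightening $\Phi(\sigma)(x)$. The crucial point is that both values lie in the same convex ball, so by convexity the whole homotopy stays inside that ball; its oscillation is therefore $<\epsilon$ and $H$ is itself $\epsilon$-continuous, giving an honest chain homotopy inside $C_*^\epsilon(X)$. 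Hence $\iota^\epsilon$ is a chain homotopy equivalence and $\iota^\epsilon_*$ is an isomorphism for all $0<\epsilon<\rho$.

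I expect the main obstacle to be organizing $\Phi$ into an actual chain map despite the simplex-dependent subdivision depth $m(\sigma)$: as in Hatcher's proof of the small-simplices theorem, one must interleave $S^{m}$, the homotopies $D_m=\sum_{i<m}TS^{i}$, and correction terms so that $\partial\Phi=\Phi\partial$ holds on the nose. The genuinely new verification---and the part requiring care---is checking that the straightening and all intermediate homotopies remain \emph{$\epsilon$-continuous} (not merely continuous), which is precisely where strong convexity of small balls, rather than mere contractibility, is used; everything else is bookkeeping modeled on the classical subdivision and straightening arguments.
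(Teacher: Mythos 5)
Your proposal is correct and runs on the same geometric engine as the paper's proof: iterated barycentric subdivision of an $\epsilon$-simplex until each piece maps into a geodesically convex ball of the compact Riemannian manifold, replacement of each small piece by the straight (geodesic) simplex on its sampled vertices, and a prism homotopy that never leaves that convex ball. The packaging, however, differs. The paper fixes $0<\mu\le\epsilon<\delta$, with $\delta$ a Lebesgue number for a finite cover by convex balls, and runs the construction through the long exact sequence of $0\to C_*^{\mu}(X)\to C_*^{\epsilon}(X)\to C_*^{\epsilon}(X)/C_*^{\mu}(X)\to 0$: the straightened chain is genuinely continuous, hence lies in $C_*^{\mu}(X)$, so the relative homology $H_*(X^{\epsilon},X^{\mu})$ vanishes, all transition maps below scale $\delta$ are isomorphisms, and the identification with $H_*(X)$ is read off from the same construction at the end. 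You instead promote the straightening to a chain map $\Phi\colon C_*^{\epsilon}(X)\to C_*(X)$ that is a chain homotopy inverse to the inclusion $\iota^{\epsilon}$; this exhibits the comparison map $H_n(X)\to H_n^{\epsilon}(X)$ explicitly and yields stability of the inverse system as a byproduct, at the cost of the extra bookkeeping (interleaving $S^{m}$ and $D_m$) needed to make $\Phi$ a chain map despite the simplex-dependent subdivision depth --- bookkeeping that the paper's relative-cycle formulation largely sidesteps. One small correction: $\epsilon$-continuity of your homotopy $H$ does not follow from its image having ``oscillation $<\epsilon$'' (the union of the interpolating geodesics can have diameter close to $2\epsilon$); what you need, and what you do get from the $\epsilon$-continuity of $\sigma$, the continuity of $\Phi(\sigma)$, and the continuous dependence of minimizing geodesics on their endpoints inside a strongly convex ball, is that $\delta$-close points of $\Delta^n\times I$ have $\epsilon$-close images. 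The paper is no more explicit on this point.
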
 
\begin{proof}
Since $X$ is a Riemannian manifold, by Theorem 5.1 in \cite{Bott}, every point has a geodesically convex neighborhood. So for any $x\in X$, there exists $\epsilon_x>0$ such that $B(x,\epsilon_x)\cong\mathbb{R}^n$ and $X=\bigcup_{x\in X}B(x,\epsilon_x)$. By compactnesss, there exists $N>0$ such that $X$ can be expressed as $\bigcup_{i=1}^NB(x_i,\epsilon_{x_i})$. Let $\delta>0$ be a Lebesgue number for this cover, i.e., for $\forall x\in X$, $B(x,\delta)\subset B(x_k,\epsilon_{x_k})$ for some $1\leq k\leq N$. Hence each point of $X$ has a $\delta$-ball homeomorphic to $\mathbb{R}^n$.

Now for $\mu\leq\epsilon<\delta$, consider the short exact sequence
$$
\xymatrix{0 \ar[r] & C_n^{\mu}(X)\ar[r]& C_n^{\epsilon}(X) \ar[r] & C_n^{\epsilon}(X)/C_n^{\mu}(X)\ar[r]&0}
$$
and by Snake Lemma, we have the following long exact sequence
$$
\xymatrix{\cdots \ar[r] & H_{n+1}(X^{\epsilon},X^{\mu}) \ar[r]& H_{n}^{\mu}(X) \ar[r] & H_{n}^{\epsilon}(X) \ar[r] & H_n(X^{\epsilon},X^{\mu}) \ar[r] & \cdots}
$$
where $H_n(X^{\epsilon},X^{\mu})$ denotes the homology group of
$$
\partial:C_n^{\epsilon}(X)/C_n^{\mu}(X)\rightarrow C_{n-1}^{\epsilon}(X)/C_{n-1}^{\mu}(X).
$$
Then we claim that $H_n(X^{\epsilon},X^{\mu})=0$ for all $n$.

Let $[\alpha]\in H_n(X^{\epsilon},X^{\mu})$ where $\alpha\in C_n^{\epsilon}(X)$ and $\partial\alpha\in C_{n-1}^{\mu}(X)$. Write $\alpha=\sum_{i=1}^kn_i\sigma_i$ with $\sigma_i:\Delta^n\rightarrow X$ an $\epsilon$-continuous map. By definition,
$$
\partial\alpha=\sum_{i=1}^k\sum_{j=0}^nn_i\sigma_i|[v_0,\cdots,\hat{v_j},\cdots,v_n]
$$
which is a formal sum of $\mu$-continuous maps. Hence the restrictions of $\sigma_i:\Delta^n\rightarrow X$ to the faces of $\Delta^n$ consist of $\mu$-continuous maps and $\epsilon$-continuous maps and in the formal sum of $\partial\alpha$, all of those $\epsilon$-continuous maps are cancelled. 

For $\sigma_i$, for any $x\in\Ima(\sigma_i)\subset X$, $\bigcup_{x\in\Ima(\sigma_i)}B(x,\delta)$ forms an open cover of its image in $X$. Define $S:C_n^{\epsilon}(X)\rightarrow C_n^{\epsilon}(X)$ by sending each $\epsilon$-singular $n$-simplex $\sigma:\Delta^n\rightarrow X$ to $\sigma_{\sharp}S\Delta^n$ where $S\Delta^n$ is the signed sum of the $n$-simplices in the barycentric subdivision of $\Delta^n$ and $S\sigma$ is the corresponding signed sum of the restrictions of $\sigma$ to the $n$-simplices of the barycentric subdivision of $\Delta^n$. Since $\mu\leq\epsilon<\delta$, we can guarantee that there exists $m>0$, s.t. $S^m\sigma_i=\sum_{l=1}^Nk_l\sigma_{i_l}$ with $k_l=\pm1$, $\Ima(\sigma_{i_l})\subset B(x_l,\delta)$ for some $x_l\in\Ima(\sigma_i)$, where $\sigma_{i_l}:\Delta_l^n\rightarrow X$ and $\bigcup_{l=1}^N\Delta_l^n=\Delta^n$. 

Next, for each $\sigma_{i_l}:[v_0,v_1,\cdots,v_n]\rightarrow X$, let $\varphi_l:B(x_l,\delta)\rightarrow\mathbb{R}^n$ be the coordinate map and let $f_{il}=\varphi_l\circ\sigma_{il}:\Delta_l^n\rightarrow\mathbb{R}^n$. Then we do the following construction,
\begin{align*}
F_{il}:\Delta_l^n\times I&\rightarrow\mathbb{R}^n\\
(\sum_{j=0}^n\lambda_jv_j,t)&\mapsto (1-t)f_l(\sum_{j=0}^n\lambda_jv_j)+t\sum_{j=0}^n\lambda_jf_l(v_j).
\end{align*}

\begin{center}
\begin{tikzpicture}\tiny
\draw[fill,blue!7!white](4,0)--(6.5,0)--(5.5,1)--cycle;
\draw[color=blue,thick](4,0)--(6.5,0)--(5.5,1)--cycle;
\draw[dashed,thick](4,-2.5)--(5.5,-1.5)--(6.5,-2.5);
\draw[thick](4,-2.5)--(6.5,-2.5);
\draw[thick](4,0)--(4,-2.5);
\draw[thick](6.5,0)--(6.5,-2.5);
\draw[dashed,thick](5.5,1)--(5.5,-1.5);
\filldraw[blue](4,0) circle(1.5pt) node[left]{$(v_0,1)$};
\filldraw[blue](6.5,0) circle(1.5pt) node[right]{$(v_1,1)$};
\filldraw[blue](5.5,1) circle(1.5pt) node[above]{$(v_2,1)$};
\filldraw(4,-2.5) circle(1.5pt) node[left]{$(v_0,0)$};
\filldraw(6.5,-2.5) circle(1.5pt) node[right]{$(v_1,0)$};
\filldraw(5.5,-1.5) circle(1.5pt) node[above]{$(v_2,0)$};
\filldraw(5.5,-2.1) circle(2pt);
\draw[->](5.5,-2.1)..controls(6.5,-2.3) and (7.2,-2.2)..(7.5,-1.5);
\draw(7.5,-1.5) node[right]{$(\sum_{j=0}^2\lambda_jv_j,0)$};
\draw(3.5,-0.5) node[left]{$\Delta_l^2\times I$};
\draw[thick](11,1.5)..controls(10.8,1.2) and (10.6,0.5)..(10.5,-.4);
\draw[thick](14,1.8)..controls(13.8,1.5) and (13.6,0.8)..(13.5,-.1);
\draw[thick](11,1.5)..controls(12,1.8) and (13,1.9)..(14,1.8);
\draw[thick](10.5,-.4)..controls(12,.1) and (13,0)..(13.5,-.1);
\draw[thick](12.2,.8) circle (.7);
\draw[thick](11.7,.5)..controls(12.1,.6) and (12.3,.5)..(12.5,.4);
\draw[thick](11.7,.5)..controls(12,1) and (12.3,1)..(12.7,1);
\draw[thick](12.7,1)..controls(12.4,.7) and (12.5,.5)..(12.5,.4);
\draw(14,1.8) node[right]{$X$};
\draw(13.3,1.1) node[right]{$B(x_l,\delta)$};
\draw[->](7.2,-1.3)..controls(8.2,.5) and (9.3,.7)..(10,.6);
\draw(8.5,.3) node[above]{$\sigma_{il}$};
\draw[thick](12,-3.5) circle(2);
\begin{scope}
\clip(10,-3.5) rectangle (14,-2.4);
\draw[thick, dashed] (12,-3.5) ellipse(2 and .7);
\end{scope}
\begin{scope}
\clip(10,-3.5) rectangle (14,-5);
\draw[thick] (12,-3.5) ellipse(2 and .7);
\end{scope}
\draw[very thick,dashed] (10.6,-2.8)..controls(11,-3.5) and (11.5,-3.4)..(12,-2.9);
\draw[very thick,dashed] (12,-2.9)..controls(12.5,-3.2) and (13,-3.3)..(13.2,-2.3);
\draw[very thick,dashed] (10.6,-2.8)..controls(11.5,-1.5) and (13,-1.6)..(13.2,-2.3);
\draw[fill,blue!7!white](10.6,-2.8)--(12,-2.9)--(13.2,-2.3)--cycle;
\draw[color=blue,very thick](10.6,-2.8)--(12,-2.9)--(13.2,-2.3)--(10.6,-2.8);
\filldraw[blue](10.6,-2.8) circle(1.5pt);
\filldraw[blue](12,-2.9) circle(1.5pt);
\filldraw[blue](13.2,-2.3) circle(1.5pt);
\draw[->](12,0)..controls(11.7,-0.4) and (11.7,-0.7)..(11.8,-1.3);
\draw(11.7,-0.7) node[left]{$\varphi_l$};
\draw[->](5.5,-2.8)..controls(6,-4.2) and (7.5,-4.6)..(9.5,-4.5);
\draw(6.4,-3.5) node[right]{$f_l=\varphi_l\circ\sigma_{il}$};
\end{tikzpicture}
\end{center}

Let $\beta_{il}=\varphi_l^{-1}\circ F_{il}:\Delta_l^n\times I\rightarrow X$ that can be seen as a homotopy equivalence between an $\epsilon$-continuous map and a continuous map. Since $\Delta_l^n\times I$ is a simplicial complex, $\beta_{il}$ can be regarded as a formal sum of $\epsilon$-singular $n+1$-simplices.

Finally, given $[\alpha]\in H_n(X^{\epsilon},X^{\mu})$, we can take $\beta=\sum_{i=1}^kn_i\sum_{l=1}^N\beta_{il}$ to be such that $\partial\beta$ is a formal sum of $\alpha$ as well as some $\mu$-continuous maps, i.e., $\alpha-\partial\beta\in C_n^{\mu}(X)$. Therefore, $H_n(X^{\epsilon},X^{\mu})=0$ and $H_n^{\mu}(X)\cong H_n^{\epsilon}(X)$ for $0<\mu\le\epsilon<\delta$. Actually, from the argument above, we can even see that $H_n^{\epsilon}(X)\cong H_n(X)$ for $0<\epsilon<\delta$. Therefore, by universal property of the inverse limit, $H_n^{lc}(X)\cong H_n(X)$ for all $n$.
\end{proof}

\begin{example}
Let $p$ be a point in $S^n$, then $H_n^{lc}(S^n\setminus\{p\})\cong\mathbb{Z}$ and $H_i^{lc}(S^n\setminus\{p\})=0$ for $i\neq n$. Indeed, $\overline{S^n\setminus\{p\}}=S^n$ and $S^n$ is a compact Riemannian manifold, so 
$$
H_i^{lc}(S^n\setminus\{p\})\cong H_i^{lc}(S^n)\cong H_i^n(S^n).
$$
\end{example}

\section{Topological entropy and lc-homology}

In this section, we give a generalization of the entropy conjecture and consequently establish a connection between topological entropy and $lc$-homology.

Let $(X,d)$ be a compact metric space with $\rank (H_i^{lc}(X))<\infty$ for each $i\geq 0$ and $f:X\rightarrow X$ be a continuous map. For the induced homomorphism $f^{lc}_{*i}: H^{lc}_i(X)\rightarrow H^{lc}_i(X)$, denote the spectral radius of $f^{lc}_{*i}$ by $\rho(f^{lc}_{*i})$, i.e.,
$$
\rho(f^{lc}_{*i})=\lim\limits_{n\rightarrow \infty}\|(f^{lc}_{*i})^n\|^{\frac{1}{n}}
$$
where $\|(f^{lc}_{*i})^n\|$ is the operator norm of the linear operator $(f^{lc}_{*i})^n$.

\begin{theorem}
$h(f)\geq \log \rho(f^{lc}_{*1})$.
\end{theorem}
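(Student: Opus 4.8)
The plan is to run Manning's first-homology argument at each finite scale $\epsilon$ and then pass to the inverse limit, using Proposition~\ref{homlc} for the scale bookkeeping and Proposition 2.3 to supply, at scale $\epsilon$, the ``local contractibility'' that Manning had to assume outright. Write $\rho=\rho(f^{lc}_{*1})$. If $\rho\le 1$ there is nothing to prove, since $h(f)\ge 0$, so assume $\rho>1$ and fix $\rho'$ with $1<\rho'<\rho$. Because $X$ is compact, $f$ is uniformly continuous and $f^{lc}_{*1}$ is defined; over $\mathbb{R}$ it is a linear endomorphism of $\mathbb{R}^{b}$, $b=\operatorname{rank}H_1^{lc}(X)<\infty$, with $\|(f^{lc}_{*1})^{n}\|\ge(\rho')^{n}$ for infinitely many $n$. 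I would first fix a homology class $\alpha$ and a dual first-cohomology class (a ``winding coordinate'') $\phi$ so that the pairing $\langle\phi,(f^{lc}_{*1})^{n}\alpha\rangle$ grows like $\rho^{n}$; by naturality of the pairing this is $\langle\phi,\cdot\rangle$ evaluated on $f^{n}$ applied to a representative of $\alpha$.

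Next I would descend from the inverse limit to a single fine scale $\epsilon$. There $\alpha$ is represented by an honest $\epsilon$-$1$-cycle, i.e.\ a finite combination of $\epsilon$-continuous edges $\Delta^{1}\to X$ with zero boundary; concatenating the edges realizes it as a loop $\gamma_\epsilon$ in $X$. The class $\phi$ is represented by a cochain that is \emph{bounded on each $\epsilon$-small edge}, and this boundedness is exactly where Proposition 2.3 enters: since any configuration of diameter $<\epsilon$ is $\epsilon$-nullhomologous, a dual cocycle can be taken to assign a uniformly bounded value to every $\epsilon$-small arc, so bounded-diameter wiggling contributes nothing and only genuine large-scale circulation is counted. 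Measuring in the dynamical metric $d_{n}$, an arc that is $\epsilon$-small for $d_{n}$ is in particular $\epsilon$-small for each $d\circ(f^{i}\times f^{i})$, $0\le i<n$; hence the scalar partial winding $g(x)=\langle\phi,\gamma_\epsilon|_{[\text{start},x]}\rangle$ changes by at most a fixed constant $C$ between any two $d_{n}$-$\epsilon$-close points, while running from $0$ to a value of size $\gtrsim(\rho')^{\,n-1}$ because $\langle\phi,f^{n-1}\gamma_\epsilon\rangle=\langle\phi,(f^{lc}_{*1})^{n-1}\alpha\rangle\gtrsim(\rho')^{\,n-1}$ and the cocycle is computed arc-by-arc.

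From this I would extract the separated set: choosing points of $\gamma_\epsilon$ at which $g$ takes the successive values $0,2C,4C,\dots$ yields at least $\asymp(\rho')^{\,n-1}$ points that are pairwise $(n,\epsilon)$-separated, since two of them lying within $d_{n}$-distance $\epsilon$ would have partial windings differing by at most $C<2C$. Because these points lie in $X$, this forces $s_{n}(\epsilon,f)\gtrsim(\rho')^{\,n-1}$ for the relevant $n$, whence $\limsup_{n}\tfrac1n\log s_{n}(\epsilon,f)\ge\log\rho'$; as this holds for all small $\epsilon$, $h(f)\ge\log\rho'$, and letting $\rho'\uparrow\rho$ gives $h(f)\ge\log\rho$.

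The step I expect to be the main obstacle is the construction in the second paragraph of a cocycle dual to the growth direction that is \emph{uniformly bounded on $\epsilon$-small arcs}: this bounded winding coordinate is the $lc$-theoretic replacement for the stable-norm/length comparison available on a Riemannian manifold, and its existence for a general compact metric space is precisely what must be harvested from Proposition 2.3 and the finite rank hypothesis. The second delicate point is uniformity of the scale parameter: composition with $f$ coarsens the scale (Proposition~\ref{homlc}), so I must check that the finite-scale estimates can be arranged compatibly across the inverse system and that the constant $C$ and the boundedness of $\phi$ survive the passage $\epsilon\to 0^{+}$, so that the lower bound is genuinely witnessed by the $lc$-class $(f^{lc}_{*1})^{n}\alpha$ rather than by scale-dependent artifacts. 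A safe alternative for these estimates is to reinterpret the partial-winding count as a displacement growth in the maximal free-abelian $lc$-cover twisted by $f^{lc}_{*1}$ and to invoke a Bowen-type inequality bounding $h(f)$ below by that growth rate.
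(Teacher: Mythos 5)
Your overall strategy runs in the opposite direction from the paper's. The paper follows Manning: it fixes a $\delta$-net $P$, represents every relevant class in $H_1^{\epsilon_1\epsilon_0}(P,3\delta;\mathbb{R})$ by a chain of short edges between net points, and then shadows the $f$-orbit of such a chain by a $(k,\delta)$-spanning set $Q_k$, excising the small (hence $\epsilon_0$-null-homologous) loops created whenever the shadowing sequence repeats a point; this yields the \emph{upper} bound $\|f_{\epsilon_0*1}^{k-1}\|'\le 2(1+|Q_k|)$ and hence $\log\rho(f^{lc}_{*1})\le h(f)$. You instead aim for a \emph{lower} bound on $s_n(\epsilon,f)$ by spreading points along a loop according to a partial winding function. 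That would be a genuinely different proof if it worked, but it contains a step that is false, not merely unproved.

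The gap is the separation claim in your third paragraph. You assert that if two points $\gamma_\epsilon(s),\gamma_\epsilon(t)$ satisfy $d_n(\gamma_\epsilon(s),\gamma_\epsilon(t))<\epsilon$, then $|g(t)-g(s)|\le C$. But $g(t)-g(s)$ is the pairing of the cocycle with the entire arc of $f^{n-1}\gamma_\epsilon$ between the two parameter values, whereas $d_n$-closeness of the two points only controls the \emph{endpoints} of that arc and of its earlier iterates; it says nothing about how much the arc winds. Your second paragraph correctly bounds the cocycle only on arcs whose $d_n$-\emph{diameter} is less than $\epsilon$, which is a much stronger hypothesis than $d_n$-closeness of the two endpoints, and it is the weaker hypothesis you actually invoke when extracting the separated set. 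A concrete failure: take $X=S^1$, $f(z)=z^2$, $\gamma_\epsilon$ the identity loop and $\phi$ the angular cocycle; the parameters $s=0$ and $t=1$ give the same point of $X$, so their $d_n$-distance is $0$, yet their partial windings differ by $2^{n-1}$. This endpoint-versus-arc discrepancy is precisely the difficulty that Manning's excision step (reproduced in the paper as the suppression of $x_{j+1},\dots,x_l$ when $x_j=x_l$ in the shadowing sequence, using that small loops are contractible at scale $\epsilon_0$) is designed to control, and your proposal has no analogue of it. The two obstacles you flag yourself --- existence of a cocycle uniformly bounded on $\epsilon$-small arcs, and the coarsening of scale under composition with $f$ --- are real but secondary: even granting both, the separated-set extraction does not go through.
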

\begin{proof}
Let $H_1^{lc}(X)=\varprojlim((H_1^{\epsilon_n}(X))_{n=0}^{\infty}, (\varphi_{\epsilon_k\epsilon_l *})_{l\leq k})$ and fix $\epsilon_0>0$. Then any loop of diameter $<\epsilon_0$ can be seen as contractible under $\epsilon_0$-scale. Choose $\mu<\epsilon_0$ such that whenever $d(x,y)<\mu$, $d(f(x),f(y))<\epsilon_0$. Then take $\delta<\frac{1}{3}\mu$ and clearly there exists an $\epsilon_0$-continuous path from $x$ to $y$ whenever $d(x,y)<3\delta$. Let $P$ be a $\delta$-net in $X$ and define $C_1^{\epsilon_0}(P,3\delta;\mathbb{R})$ to be the free abelian group generated by  $\{[x,y]:x,y\in P, d(x,y)<3\delta\}$, where $[x,y]$ denotes some fixed $\epsilon_0$-path from $x$ to $y$ of diameter $<\mu$, with real coefficients. Similarly, let $C_0^{\epsilon_0}(P;\mathbb{R})$ be the free abelian group generated by $P$ with real coefficients and we can define $\partial_{\epsilon_0}:C_1^{\epsilon_0}(P,3\delta;\mathbb{R})\rightarrow C_0^{\epsilon_0}(P;\mathbb{R})$ by $\partial_{\epsilon_0}[x,y]=y-x$. 

For any $\epsilon_1<3\delta$, consider the $\epsilon_0$-coordinate of  $\varprojlim((H_1^{\epsilon_n}(X))_{n=0}^{\infty}, (\varphi_{\epsilon_k\epsilon_l *})_{l\leq k})$ that is contained in $\varphi_{\epsilon_1\epsilon_0*}(H_1^{\epsilon_1}(X))$. 
Let $H_1^{\epsilon_1\epsilon_0}(P,3\delta;\mathbb{R})$ be the subgroup of $H_1^{\epsilon_0}(P,3\delta;\mathbb{R})$ whose representatives are in $C_1^{\epsilon_1}(P,3\delta;\mathbb{R})$
and it is naturally isomorphic to  $\varphi_{\epsilon_1\epsilon_0*}(H_1^{\epsilon_1}(X;\mathbb{R}))$. Indeed, every $\epsilon_0$-homology class in $\varphi_{\epsilon_1\epsilon_0*}(H_1^{\epsilon_1}(X;\mathbb{R}))$ has a representative in $C_1^{\epsilon_1}(P,3\delta;\mathbb{R})$ (defined similarly as $C_1^{\epsilon_0}(P,3\delta;\mathbb{R})$)  obtained by breaking down $\epsilon_1$-paths in $C_1^{\epsilon_1}(X;\mathbb{R})$ into combinations of short $\epsilon_1$-paths joining points of $P$, since $\epsilon_1<3\delta$. Then we can define a norm $\|\ \|$ on $C_1^{\epsilon_1}(P,3\delta;\mathbb{R})$ by 
$$\|\sum_i a_i\sigma_i\|=\sum_i|a_i|$$
and a norm $\|\ \|'$ on $H_1^{\epsilon_1\epsilon_0}(P,3\delta;\mathbb{R})$ by $$\|[u]\|'=\inf_{[\sigma]=[u]}\|\sigma\|$$
(since $H_1^{\epsilon_1\epsilon_0}(P,3\delta;\mathbb{R})$ is finite dimensional, all norms on it are equivalent).

Take $\epsilon_1$ to be sufficiently small. For a non-zero class $[u]\in H_1^{\epsilon_1\epsilon_0}(P,3\delta;\mathbb{R})$, take $\sigma=\sum_{i=1}^na_i\sigma_i\in C_1^{\epsilon_1}(P,3\delta;\mathbb{R})$ to be its cycle with $\|\sigma\|\le 2\|[u]\|'$. 

Let $Q_k$ be a minimal $(k,\delta)$-spanning set for $f$. For each $i$, we do the following construction with $\sigma_i:I\rightarrow X$. Define
$$
F_k=\id\times f\times f^2\times\cdots\times f^{k-1}: X\mapsto X\times f(X)\times\cdots\times f^{k-1}(X)
$$
and then the points of $F_k(Q_k)$ have $\delta$-neighbourhoods in $X^k$ (endowed with $d_{\infty}$ metric that takes the largest of the distances in each of the $k$ factors) that cover $F_k(X)$. For the sequence of these neighbourhoods through which $F_k(\sigma_i(I))$ passes, we pick a sequence $$x_1,x_2,\cdots,x_b\in Q_k$$
of some length b such that each $F_k(x_p)$ is in a $\delta$-neighbourhood of $F_k(\sigma_i(I))\subset X^k$, $d(f^j(x_{p-1}),f^j(x_p))<3\delta$ for $1<p\le b$, and $d(f^j(\sigma_i(0)),f^j(x_1)), d(f^j(x_b),f^j(\sigma_i(1)))<\delta$ for $0\le j<k$. Then $\sigma_i$ is homologous to 
$$
\tau_i=[\sigma_i(0),x_1]\ast [x_1,x_2]\ast [x_2,x_3]\cdots\ast[x_{b-1},x_b]\ast [x_b,\sigma_i(1)]
$$
where $\ast$ denotes the composition of paths.

It is possible that the length $b>|Q_k|$(the cardinality of $Q_k$). If $x_j=x_l$ for some $j\neq l$, then $\tau_i$ contains a loop from $x_j$ to $x_l=x_j$ and this loop is contained in an $\epsilon_1+\delta$ neighbourhood of $\sigma_iI$ which itself has diameter $<\epsilon_1$. So the diameter of the loop is $<\epsilon_0$. Hence we could suppress $x_{j+1},\cdots,x_l$ and assume that $b\le |Q_k|$.

Since $\sigma_i$ is $\epsilon_0$-continuous, without loss of generality, we can assume that $f^{m}\sigma_i$ is $\epsilon_0^{(m)}$-continuous with $\epsilon_0^{(k-1)}\geq\epsilon_0^{(k-2)}\geq\cdots\geq\epsilon_0^{(0)}=\epsilon_0$. We claim that under $\epsilon_0^{(k-1)}$-scale, $f^{k-1}\sigma_i$ is homologous ($\sim$) to $f^{k-1}\tau_i\sim v_i$, where 
$$
v_i=[f^{k-1}(\sigma_i(0)),f^{k-1}(x_1)]\ast\cdots\ast [f^{k-1}(x_{b-1}),f^{k-1}(x_b)]\ast [f^{k-1}(x_b),f^{k-1}(\sigma_i(1))]
$$

Indeed, $f[x_{j-1},x_j]\sim [f(x_{j-1}),f(x_j)]$ under $\epsilon_0^{(1)}$-scale since $f$ does not extend the path $[x_{j-1},x_j]$ so much. Similarly, $f[f(x_{j-1}),f(x_j)]\sim [f^2(x_{j-1}),f^2(x_j)]$ under $\epsilon_0^{(2)}$-scale etc. and so $f^{k-1}[x_{j-1},x_j]\sim [f^{k-1}(x_{j-1}),f^{k-1}(x_j)]$ under $\epsilon_0^{(k-1)}$-scale and 
$$
f^{k-1}\sigma\sim\sum a_if^{k-1}\sigma_i\sim \sum a_if^{k-1}\tau_i\sim \sum a_iv_i=v.
$$

Let $p:X\rightarrow P$ be a map such that $d(x,px)<\delta$ for $\forall x\in X$. Then we replace $\alpha$ by the $\epsilon_0^{(k-1)}$-singularly homologous cycle $\alpha$ obtained by replacing each $\epsilon_0^{(k-1)}$-singular simplex $[f^{k-1}(x),f^{k-1}(y)]$ in $v$ by $[pf^{k-1}(x),pf^{k-1}(y)]$ which is a generator of $C_1^{\epsilon_0^{(k-1)}}(P,3\delta;\mathbb{R})$ since $d(pf^{k-1}(x),pf^{k-1}(y))<3\delta$.

Since $f_{\epsilon_0*1}^{k-1}$ is the induced homomorphism on the $\epsilon_0$-coordinate of  $\varprojlim((H_1^{\epsilon_n}(X))_{n=0}^{\infty})$ to $H_1^{\epsilon_0^{(k-1)}}(X;\mathbb{R})$, by the choice of $\| \ \|'$, we have
$$\|f_{\epsilon_0*1}^{k-1}([u])\|'\le\|\alpha\|\le (1+|Q_k|)\sum|a_i|$$
whereas on $H_1^{\epsilon_1\epsilon_0}(P,3\delta;\mathbb{R})$,
$$\|[u]\|'\geq\frac{1}{2}\|\sigma\|=\frac{1}{2}\sum|a_i|.$$
Therefore 
$$
\frac{\|f_{\epsilon_0*1}^{k-1}([u])\|'}{\|[u]\|'}<2(1+|Q_k|)
$$
and this holds for any non-zero $[u]\in H_1^{\epsilon_0}(X;\mathbb{R})$ and all $k$. Hence $\|f_{\epsilon_0*1}^{k-1}\|'<2(1+|Q_k|)$. But $\rho(f_{\epsilon_0*1})=\lim\limits_{k\to\infty}\|f_{\epsilon_0*1}^k\|'^{\frac{1}{k}}$, so for the $\epsilon_0$-coordinate of $\varprojlim((H_1^{\epsilon_n}(X))_{n=0}^{\infty})$, 
\[\begin{split}
\log \rho(f_{\epsilon_0*1})&=\lim\frac{1}{k}\log \|f_{\epsilon_0*1}^k\|'\le \limsup\frac{1}{k}\log 2(1+|Q_{k+1}|)\\
&=\limsup\frac{1}{k}\log |Q_k|=h(f,\delta)\le h(f).
\end{split}
\]
Similarly, this will hold for each coordinate of the given inverse limit. Hence,
$$
\log \rho(f_{*1}^{lc})\le h(f).
$$
\end{proof}

\end{document}